\theoremstyle{plain}
\newtheorem{thm}{Theorem}
\newtheorem{prop}{Proposition}[section]
\newtheorem{lem}[prop]{Lemma}
\newtheorem{defi}[prop]{Definition}
\newtheorem{rmk}[prop]{Remark}
\newcommand {\R} {\mathbb{R}} 
 \newcommand {\N} {\mathbb{N}}
\newcommand {\C} {\mathbb{C}} 
\newcommand {\p} {\partial}
\newcommand {\D} {\Delta}
\newcommand {\supp} {\text{supp}}
\DeclareMathOperator{\di}{div}
\DeclareMathOperator {\Imm} {Im}
\DeclareMathOperator{\Res}{Res}
\begin{document}
\title[Higher Regularity for the Signorini Problem]{Higher Regularity for the Signorini Problem for the Homogeneous, Isotropic Lam\'e System}

\author{Angkana Rüland}
\address{Ruprecht-Karls-Universität Heidelberg, Institut für Angewandte Mathematik, Im Neuenheimer Feld 205, 69121 Heidelberg, Germany}
\email{Angkana.Rueland@uni-heidelberg.de}

\author{Wenhui Shi}
\address{9 Rainforest Walk, Level 4, Monash University, Clayton 3168, VIC, Australia}
\email{wenhui.shi@monash.edu}

\begin{abstract}
In this note we discuss the (higher) regularity properties of the Signorini problem for the homogeneous, isotropic Lam\'e system. Relying on an observation by Schumann \cite{Schumann1}, we reduce the question of the solution's and the free boundary regularity for the homogeneous, isotropic Lam\'e system to the corresponding regularity properties of the obstacle problem for the half-Laplacian.
\end{abstract}

\maketitle

\section{The Lam\'e Problem for Homogeneous, Isotropic Materials}

\subsection{Set-up} The Signorini problem consists in finding the equilibrium position of an elastic body resting on a rigid surface \cite{S33,S59}. When the body is isotropic, homogeneous and when the surface is flat, the problem can be formulated locally as finding local minimizers $u=(u^1,\cdots, u^n)$, $n\geq 2$, to the functional
\begin{align}
\label{eq:min1}
J(u)=\int_{B_1^+} \frac{\mu}{2} |\nabla u+(\nabla u)^T|^2 + \lambda (\di u)^2 + F  u \ dx
\end{align}
over the closed convex set
\begin{align}
\label{eq:min2}
\mathcal{K}_0:=\{u=(u^1,\cdots, u^n)\in W^{1,2}(B_1^+; \R^n): u^n\geq 0 \text{ on } B'_1 \},
\end{align}
i.e. $J(u)\leq J(v)$ for all $v\in \mathcal{K}_0$ with $v-u=0$ on $\p B_1\cap \{x_n>0\}$. Here $\lambda, \mu>0$ are the Lam\'e constants, $F\in L^p(B_1^+;\R^n)$ with $p>2n$ is a given inhomogeneity, $Fu := \sum\limits_{j=1}^{n} F^j u^j$, $B_1=\{x\in \R^n:|x|<1\}$, $B_1^+=B_1\cap \{x_n>0\}$ and $B'_1=B_1\cap \{x_n=0\}$. 
It was shown in \cite{Schumann1} that local minimizers are $C^{1,\alpha}_{loc}(B_1^+\cup B'_1)$ regular for some $\alpha\in (0,1)$. Furthermore, they satisfy the Euler-Lagrange equations
\begin{align}\label{eq:Lame_EL}
\begin{split}
\mu\Delta u^j+ (\mu+\lambda) \p_j(\di u) &=F^j \text{ in } B_1^+, \ j\in \{1,\cdots, n\},\\
\p_nu^j + \p_j u^n &=0 \text{ on } B'_1, \ \ j\in \{1,\cdots, n-1\},
\end{split}
\end{align}
and the Signorini boundary condition
\begin{align}\label{eq:Lame_EL2}
u^n\geq 0, \quad 2\mu\p_n u^n + \lambda \di u \leq 0, \quad u^n (2\mu\p_n u^n + \lambda \di u)=0 \text{ on } B'_1.
\end{align}
The set $\Lambda_u:=\{x\in B'_1: u^n(x)=0\}$ is called the contact set and $\Gamma_u:=\p\{x\in B'_1:u^n(x)>0\}\cap B'_1$ is the free boundary. 

In this article we are interested in the optimal regularity of the solution and the so-called regular and singular free boundaries, cf.~Definition \ref{def:reg_fbp} and \eqref{eq:sing_bdry} for the precise definitions. Before stating our results, we give a brief overview of the literature. There has been an extensive study of  the Signorini problem for scalar equations, for a (non-exhaustive) overview we refer to \cite{AC04,ACS08,GP09, KPS, DSS16, FS16,CSV20} for the Laplace operator, to \cite{Gu, GSVG14, GPSVG15, KRS14, KRSI, RS17} for variable coefficient second order elliptic operators as well as to \cite{ROS17,RD20} for fully nonlinear elliptic operators. For further background and references we point to the survey articles \cite{DS17, FR20} and the book \cite{PSU}. It is worth mentioning that under a decay assumption on the solution, the Signorini problem for Laplacian in the upper half space $\R^{n}_+:=\R^n\cap \{x_n>0\}$ is equivalent to the obstacle problem for the half-Laplacian $(-\Delta)^{\frac{1}{2}}$ on $\R^{n-1}\times \{0\}$ via the Dirichlet-to-Neumann mapping $u\mapsto -\p_n \bar u\big|_{\R^{n-1}\times \{0\}}$, where $\bar u$ is the solution to the Poisson equation $\Delta \bar u=0$ in $\R^n_+$ with the Dirichlet boundary value $\bar u=u$ on $\R^{n-1}\times \{0\}$, cf. \cite{CSS,CS07}. The obstacle problem for the fractional Laplacian $(-\Delta)^{s}$ with $s\in (0,1)$ and more general integro-differetial operators were studied in \cite{S07,CSS, CRS17, AR20, JN17, KRS16}.

Compared with the scalar case, there are fewer results concerning the regularity properties for the original Signorini problem of elasticity. One of the main reasons is that the common techniques for scalar equations such as comparison principles and monotonicity formulas, in general, do not apply to non-diagonal systems. Existence of weak solutions for general non-isotropic, inhomogeneous bodies was shown by Fichera \cite{Fi63} via variational inequalities (cf. also \cite{DL76}). Concerning the Signorini problem for the Lam\'e system \eqref{eq:Lame_EL}--\eqref{eq:Lame_EL2}, Schumann proved the $C^{1,\alpha}_{loc}$ regularity of the solution by transforming the problem into a scalar obstacle problem for a pseudo-differential operator on the boundary \cite{Schumann1}. In \cite{And13} Andersson showed that when $F=0$ the optimal regularity of the solution is $C^{1,\frac{1}{2}}_{loc}$ and the regular free boundary is $C^{1,\gamma}$ for some $\gamma\in (0,1)$. Instead of reducing the vectorial problem to a scalar problem, Andersson developed a linearization technique which is directly applicable to the vectorial problem and which, in particular, does not rely on any comparison principle.

\subsection{Main results} Following the approach of Schumann \cite{Schumann1}, we will use the ``Dirichlet-to-Neumann map'' of a so-called boundary contact problem, cf.~\eqref{eq:Lame}, to reduce the vectorial problem to a scalar problem on the boundary. However, working with constant coefficient operators and seeking to derive \emph{higher} regularity results, different from \cite{Schumann1}, we locally obtain the \emph{full symbol} of the ``Dirichlet-to-Neumann'' map (instead of the principal symbol only). This is achieved by first extending the problem to the whole upper half space, then computing the explicit solution to an ODE system originating from the Fourier transform of \eqref{eq:Lame} in tangential directions and finally relating this to a corresponding obstacle problem for the half-Laplacian. In particular, it turns out that the \emph{full} symbol of this ``Dirichlet-to-Neumann map'' is, up to a constant, the half-Laplacian (see Propositions \ref{prop:Lame_ext} and \ref{prop:Lame_to_obst}). Therefore, by invoking the known regularity results for the obstacle problem for the half-Laplacian  we first obtain the optimal regularity of the solution to the Signorini problem.

\begin{thm}\label{thm:opt_reg}
Let $u \in H^1(B_1^+;\R^n)$ be a solution to \eqref{eq:Lame_EL}--\eqref{eq:Lame_EL2} with $F\in L^p(B_1^+;\R^n)$, $p>2n$. Then $u\in C^{1,\frac{1}{2}}_{loc}(B_{1}^+\cup B'_{1})$.  
\end{thm}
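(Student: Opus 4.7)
The plan is to exploit Propositions \ref{prop:Lame_ext} and \ref{prop:Lame_to_obst}, which together express the boundary ``Dirichlet-to-Neumann'' map for the constant coefficient Lam\'e system as a positive constant multiple of the half-Laplacian $(-\Delta)^{1/2}$, and to feed the resulting scalar problem into the known optimal regularity theory for the thin obstacle problem.

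After a routine cutoff/localization argument that uses the $C^{1,\alpha}_{loc}$ regularity of \cite{Schumann1} to control the commutator terms produced by the cutoff, I would first reduce matters to a problem posed on $\R^n_+$ with compactly supported data in a small neighbourhood of $B_{1/2}'$. Proposition \ref{prop:Lame_ext} then provides a consistent extension to the full upper half space preserving the Signorini condition, and Proposition \ref{prop:Lame_to_obst} identifies the normal trace $w:=u^n|_{\R^{n-1}\times\{0\}}$ as the solution of the scalar obstacle problem
\begin{align*}
w\ge 0,\qquad c_{\lambda,\mu}\,(-\Delta)^{1/2} w + g \ge 0,\qquad w\bigl(c_{\lambda,\mu}(-\Delta)^{1/2}w + g\bigr)=0 \text{ on } B_{1/2}',
\end{align*}
with a positive constant $c_{\lambda,\mu}>0$ and a right hand side $g$ depending linearly on $F$ through the extension/Dirichlet-to-Neumann construction. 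The mapping properties of these operators together with $F\in L^p(B_1^+;\R^n)$, $p>2n$, then yield $g\in L^q(B_{1/2}')$ for some $q>2(n-1)$.

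At this stage the optimal regularity theorem for the obstacle problem for the half-Laplacian (for instance \cite{CSS,S07}, see also \cite{KPS}) applies and yields $w\in C^{1,1/2}_{loc}(B_{1/2}')$. To promote this scalar boundary regularity to $C^{1,1/2}_{loc}(B_{1/2}^+\cup B_{1/2}')$ for the full vector-valued $u$, I would use that the Dirichlet-to-Neumann correspondence of the two propositions also controls the tangential boundary traces of $u$ in terms of $w$ and $F$, and then appeal to standard elliptic regularity for the Lam\'e system in the half space with mixed Dirichlet/Neumann-type data and $L^p$ inhomogeneity; since $p>2n$ gives $1-n/p>1/2$, the $W^{2,p}$ theory propagates the $C^{1,1/2}$ boundary information into the interior without loss.

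The main obstacle I expect is the second step: producing the \emph{full} symbol (and not merely the principal symbol, as in \cite{Schumann1}) of the boundary Dirichlet-to-Neumann map, so that the scalar problem resulting from the reduction is a genuine obstacle problem for $(-\Delta)^{1/2}$ rather than for a pseudo-differential perturbation thereof. This algebraic identification is the essential new input — once Propositions \ref{prop:Lame_ext}--\ref{prop:Lame_to_obst} deliver it via the explicit ODE computation in the tangential Fourier variables announced in the introduction, the rest of the argument is a translation from the well-developed scalar theory and an application of standard elliptic boundary regularity for the Lam\'e system.
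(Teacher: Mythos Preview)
Your overall strategy matches the paper's: localize with a cutoff using Schumann's $C^{1,\alpha}$ result, invoke Propositions~\ref{prop:Lame_ext}--\ref{prop:Lame_to_obst} to reduce to a scalar obstacle problem for $(-\Delta)^{1/2}$, apply the known optimal regularity there, and lift back using Schauder/$W^{2,p}$ theory for the Lam\'e boundary value problem. The place where your argument has a real gap is the regularity you assign to the inhomogeneity $g$ and the references you invoke for the scalar step.

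You assert only $g\in L^q(B_{1/2}')$ for some $q>2(n-1)$ and then cite \cite{CSS,S07,KPS}. This is not enough: those results require the obstacle (equivalently, the potential $(-\Delta)^{-1/2}g$) to be at least $C^{1,\gamma}$ with $\gamma>\tfrac12$ in order to produce $C^{1,1/2}$ regularity of the solution; an $L^q$ right-hand side alone would only yield $(-\Delta)^{-1/2}g\in W^{1,q}\hookrightarrow C^{0,1-(n-1)/q}$, which falls one full derivative short. The paper closes this gap by tracking the regularity through the auxiliary problem more carefully: writing $\bar u=u\eta=w+v$ with $w$ solving \eqref{eq:Neumann}, the data there are $f\in L^p(\R^n_+)$ (from $F$) and $g^j,h\in C^{1,\alpha}$ (from Schumann's $C^{1,\alpha}$ regularity of $u$), so $W^{2,p}$ theory gives $w\in C^{1,\gamma}_{loc}$ with $\gamma=1-\tfrac np>\tfrac12$. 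Hence $\tilde h=h+2\mu\p_n w^n+\lambda\di w\in C^{0,\gamma}_{loc}$, and the obstacle $\psi=-(-\Delta_{x'})^{-1/2}\tilde h\in C^{1,\gamma}_{loc}$. Now the optimal regularity for the half-Laplacian obstacle problem with $C^{1,\gamma}$ obstacle, $\gamma>\tfrac12$ (the paper cites \cite{RS17}), yields $v^n(\cdot,0)\in C^{1,1/2}_{loc}$, and Proposition~\ref{prop:Lame_ext} upgrades this to $v\in C^{1,1/2}_{loc}$ up to the boundary. In short, the missing ingredient in your outline is that the splitting $\bar u=w+v$ and the $W^{2,p}$ estimate for $w$ are what buy you a $C^{1,\gamma}$ obstacle rather than merely an $L^q$ inhomogeneity; without this, the scalar step does not deliver $C^{1,1/2}$.
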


Building on this, our next result concerns the higher regularity of the \emph{regular} free boundary with a rather general obstacle $\varphi$ and inhomogeneity $F$. A point $x_0\in \Gamma_u$ is \emph{regular} if and only if, after subtracting an affine solution to the Signorini problem, the vanishing order of $u-\varphi$ at $x_0$ is (strictly) less than $2$, cf. Lemma \ref{lem:reg_point}.

\begin{thm}\label{thm:higher_reg}
Let $u \in H^1(B_1^+;\R^n)$ be a solution to the thin obstacle problem for the Lam\'e system \eqref{eq:Lame1} with obstacle $\varphi$ and inhomogeneity $F$. 
\begin{itemize}
\item[(i)] Suppose that $\varphi\in C^{\theta+\frac{1}{2}}(B'_1)$ and $F\in C^{\theta-\frac{3}{2}}(B_1^+\cup  B'_1;\R^n)$ with $\theta>2$ and $\theta \notin \N$, $\theta+\frac{1}{2}\notin \N$. Let $x_0\in B_{1}'$ be a regular free boundary point. Then the free boundary is $C^\theta$ in a neighbourhood of $x_0$.
\item[(ii)] Suppose that $\varphi\in C^{\omega}(B'_1)$ and $F\in C^{\omega}(B_1^+\cup B'_1;\R^n)$. Then for any regular free boundary point $x_0 \in B_{1}'$ the free boundary is $C^{\omega}$ in a neighbourhood of $x_0$.
\end{itemize}
\end{thm}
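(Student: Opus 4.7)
The plan is to use Theorem~\ref{thm:higher_reg} as a direct corollary of the reduction to a scalar obstacle problem. By Propositions~\ref{prop:Lame_ext} and \ref{prop:Lame_to_obst}, the normal trace $w:=u^n|_{B'_1}$ solves, up to a positive multiplicative constant, an obstacle problem of the form $(-\Delta)^{1/2} w = \tilde F$ in $\{w>\tilde\varphi\}\cap B'_1$ with complementarity conditions $w\geq\tilde\varphi$, $(-\Delta)^{1/2}w\geq\tilde F$, $(w-\tilde\varphi)((-\Delta)^{1/2}w-\tilde F)=0$ on $B'_1$, where the new obstacle $\tilde\varphi$ and inhomogeneity $\tilde F$ are obtained from $\varphi$ and $F$ via the explicit Fourier-side inversion of the Lamé boundary operator computed in \S\ref{prop:Lame_ext}. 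Since $\Lambda_u$ and $\Gamma_u$ coincide with the coincidence set and free boundary of this scalar problem, Theorem~\ref{thm:higher_reg} follows once one invokes the known higher regularity of the regular free boundary for the half-Laplacian obstacle problem from \cite{KPS, DSS16, KRS14, KRSI} for part (i) and the analytic regularity from \cite{KPS, DSS16} for part (ii).

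The first execution step is to make the reduction quantitative in the relevant Hölder scales. For the obstacle: the symbol identification in Proposition~\ref{prop:Lame_to_obst} shows that $\tilde\varphi$ differs from $\varphi$ only by a zeroth-order, constant coefficient pseudodifferential operation (a combination of the tangential components of the trace and an explicit Fourier multiplier), hence $\tilde\varphi\in C^{\theta+1/2}(B'_1)$ whenever $\varphi\in C^{\theta+1/2}(B'_1)$ and analogously $\tilde\varphi$ is real analytic when $\varphi$ is. For the right-hand side: one inverts the homogeneous Lamé system in $\R^n_+$ against $F$ by the Poisson-type kernel arising in the proof of Proposition~\ref{prop:Lame_ext}, and takes the Dirichlet-to-Neumann image of the resulting particular solution. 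Since this map loses exactly one derivative (it is an operator of order one composed with the order $-2$ Newtonian potential), the resulting $\tilde F$ lies in $C^{\theta-3/2}(B'_1)$ whenever $F\in C^{\theta-3/2}(B_1^+\cup B'_1)$, and likewise preserves real analyticity.

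The next step is to transfer the notion of a regular point. Using Lemma~\ref{lem:reg_point}, a free boundary point $x_0\in\Gamma_u$ is regular iff, after subtracting an affine Signorini solution, $u-\varphi$ has vanishing order strictly less than $2$ at $x_0$. Under the reduction, this translates precisely into the condition that $w-\tilde\varphi$ has vanishing order $3/2$ at $x_0$, which is the usual definition of a regular point for the half-Laplacian obstacle problem; in particular the leading blow-up profile takes the model form $c(x\cdot e)_+^{3/2}$ on $\R^{n-1}$. Once this identification is in place, the higher regularity results cited above yield $C^\theta$ (resp.~$C^\omega$) regularity of $\Gamma_u$ in a neighbourhood of $x_0$.

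The main technical obstacle is the careful tracking of the inhomogeneity: unlike the obstacle, $F$ is a bulk datum in the upper half space, while $\tilde F$ lives on the $(n-1)$-dimensional boundary. Controlling its Hölder regularity requires mapping properties of the explicit half-space Lamé Green's function on Hölder classes up to $B'_1$, and in particular a sharp accounting of the boundary trace of its normal derivative. A secondary subtlety is that the reduction in Propositions~\ref{prop:Lame_ext}--\ref{prop:Lame_to_obst} was carried out for data on the whole of $\R^{n-1}$; to use it locally, I would pass to a cutoff version of $u$ (exploiting the $C^{1,1/2}$ regularity from Theorem~\ref{thm:opt_reg}) and absorb the commutator error as an additional contribution to $\tilde F$ with the required Hölder or analytic regularity away from the support of the cutoff, which is where the regular point lies.
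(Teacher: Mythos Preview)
Your overall strategy is the same as the paper's: localize by a cutoff, split off an auxiliary Lam\'e solution absorbing the bulk inhomogeneity and the cutoff errors, observe that what remains is an obstacle problem for $(-\Delta)^{1/2}$ with a modified obstacle, transfer the notion of regular point via Lemma~\ref{lem:reg_point}, and invoke the known higher regularity for the scalar problem. So in spirit you have the right argument.

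There are, however, two imprecisions in your accounting that matter. First, the statement that ``$\tilde\varphi$ differs from $\varphi$ only by a zeroth-order, constant coefficient pseudodifferential operation'' is not accurate: in the paper's construction (Proposition~\ref{prop:Lame_to_obst}) the modified obstacle is $\psi=\varphi\eta-\bar w$ with $\bar w=(-\Delta_{x'})^{-1/2}\tilde h$, and $\tilde h$ depends on the solution $w$ of the auxiliary mixed problem \eqref{eq:Neumann} driven by $F$ and by the cutoff commutators, not merely on tangential traces of $u$. The correction to the obstacle is genuinely nonlocal and encodes the bulk datum $F$; it is not a zeroth-order multiplier acting on $\varphi$. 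Second, your derivative count for $\tilde F$ is off: the composition of the order $-2$ Green's operator with the order $+1$ Dirichlet-to-Neumann map is of order $-1$, hence \emph{gains} one derivative, so $F\in C^{\theta-3/2}$ should produce a boundary datum in $C^{\theta-1/2}$, not $C^{\theta-3/2}$. This is essential: once you absorb $\tilde F$ into the obstacle via $(-\Delta)^{-1/2}$, you need $\tilde F\in C^{\theta-1/2}$ to land in $C^{\theta+1/2}$ and apply \cite{AR20} or \cite{KRSIV}; your stated $C^{\theta-3/2}$ would only give a $C^{\theta-1/2}$ obstacle and hence at best $C^{\theta-1}$ free boundary regularity. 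The paper handles this cleanly by tracking $w\in C^{\theta+1/2}$ via Schauder (Proposition~\ref{prop:more_gen_prop_reg}), hence $\tilde h\in C^{\theta-1/2}$, hence $\bar w\in C^{\theta+1/2}$ by pseudolocality. Finally, the references you cite (\cite{KPS,DSS16,KRS14,KRSI}) are for the Laplacian Signorini problem with zero or restricted obstacles; the results actually needed here, for $(-\Delta)^{1/2}$ with a nonzero $C^{\theta+1/2}$ obstacle, are those of \cite{AR20} and \cite{KRSIV}.
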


We remark that while Theorem \ref{thm:opt_reg} for $F=0$ was already proved in \cite{And13}, the higher regularity result of Theorem \ref{thm:higher_reg} is completely new, even for $F=0$. It relies on an analogous result for the thin obstacle problem for the half Laplacian, see \cite{AR20} and \cite{KRSIV}.

Similarly as in our study of the regular free boundary, also a one-to-one correspondence between the singular free boundary can be established for the Sigorini problems for the Lam\'e operator and the Laplacian. Here, for $u$ a solution to \eqref{eq:min1}-\eqref{eq:min2} we say that $x_0 \in \Sigma(u)\cap B_1'$ is an element of the \emph{singular free boundary} for the Signorini problem for the Lam\'e operator if $x_0 \in \Gamma_u\cap B_1'$ and
\begin{align}
\label{eq:sing_bdry}
\lim\limits_{r \rightarrow 0} \frac{\mathcal{H}^{n-1}(\Lambda_u \cap B_{r}'(x_0))}{\mathcal{H}^{n-1}(B_r'(x_0))} =0, \quad B_r'(x_0)\subset B_1'.
\end{align}

Working with this definition, the structure results for the Signorini problem for the Laplacian then directly translate into corresponding results for the Lam\'e operator. As a sample result, using the results from \cite{CSV20}, we for instance obtain the following stratification of the singular set:

\begin{thm}
\label{thm:Lame_sing_set}
Let $u \in H^1(B_1^+; \R^n)$ be a solution to the thin obstacle problem for the Lam\'e system \eqref{eq:min1}--\eqref{eq:min2}. Then,
\begin{align*}
\Sigma(u) = \bigcup\limits_{d=0}^{n-2} \Sigma_d(u),
\end{align*}
where the sets $\Sigma_d(u)\subset B_1'$ are contained in a countable union of $d$-dimensional $C^{1,log}$ manifolds. 
\end{thm}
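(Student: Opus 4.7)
The plan is to leverage the reduction of the vectorial Lam\'e Signorini problem to the scalar thin obstacle problem for $(-\Delta)^{1/2}$ provided by Propositions \ref{prop:Lame_ext} and \ref{prop:Lame_to_obst}. Given a solution $u$ to \eqref{eq:min1}--\eqref{eq:min2}, I would first pass to the associated scalar function $w := u^n\big|_{B'_1}$, which through the Dirichlet-to-Neumann correspondence of those propositions solves a thin obstacle problem for the half-Laplacian (up to a positive multiplicative constant) on $B'_1$, with an inhomogeneity determined by $F$ and the tangential trace data of $u$.

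Crucially, this reduction preserves the coincidence set: $\Lambda_u = \{x' \in B_1' : u^n(x')=0\} = \{x' \in B_1' : w(x') = 0\}$. Hence the free boundary $\Gamma_u$ of the Lam\'e problem coincides with the free boundary of the scalar problem for $w$, and the density-based definition \eqref{eq:sing_bdry} selects exactly the same points as singular in both formulations. In particular $\Sigma(u) = \Sigma(w)$ as subsets of $B_1'$.

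With this identification in hand, I would invoke the stratification theorem for the singular set of the thin obstacle problem for $(-\Delta)^{1/2}$ established in \cite{CSV20}. It produces a decomposition $\Sigma(w) = \bigcup_{d=0}^{n-2} \Sigma_d(w)$, with each $\Sigma_d(w)$ contained in a countable union of $d$-dimensional $C^{1,\log}$ manifolds. Defining $\Sigma_d(u) := \Sigma_d(w)$ and transferring the decomposition through the equality $\Sigma(u) = \Sigma(w)$ then yields the statement of the theorem.

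The main obstacle I foresee lies in reconciling the extrinsic definition \eqref{eq:sing_bdry} used here with the intrinsic definition of the strata $\Sigma_d$ employed in \cite{CSV20}, which is phrased in terms of Almgren frequencies and invariance directions of the singular blow-up profiles. This equivalence is by now standard for the thin obstacle problem for the half-Laplacian and follows from monotonicity of the Almgren frequency together with the classification of singular homogeneous global solutions; one merely has to verify that the inhomogeneity produced by the reduction (which inherits its regularity from that of $F$) meets the hypotheses under which \cite{CSV20} establishes the stratification. Once these bookkeeping matters are checked, the conclusion follows directly from the scalar result.
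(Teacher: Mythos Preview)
Your proposal is correct and follows essentially the same route as the paper: reduce via Proposition~\ref{prop:Lame_to_obst} to the scalar obstacle problem for $(-\Delta)^{1/2}$, observe that the coincidence sets (and hence the density-defined singular sets) coincide locally, and then import the stratification of \cite{CSV20}. One small imprecision: the scalar function produced by Proposition~\ref{prop:Lame_to_obst} is not $u^n|_{B_1'}$ itself but $\tilde{v}=u^n\eta-\bar{w}$, which solves a \emph{zero-inhomogeneity} obstacle problem with obstacle $\psi=-\bar{w}$; the regularity check you flag (that the obstacle is smooth enough for \cite{CSV20} to apply near a singular point) is exactly what the paper records as ``$\psi=-\bar{w}$ is smooth in a neighborhood of $x_0$.''
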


We remark that lacking direct monotonicity formulae for the Signorini problem for the Lam\'e operator, the \emph{singular free boundary} had not been studied earlier.

We expect that the equivalence between the Laplacian and the full Lam\'e Signorini problems can be exploited further, thus allowing to transfer more results from the scalar to this vectorial problem.

\subsection{Outline} The remainder of the article is organized as follows: In Section \ref{sec:D-t-N} we introduce the auxiliary boundary contact problem and compute its symbol in the half-space setting. Exploiting this, in Section \ref{sec:reduc} we show that free boundary regularity for the isotropic, inhomogeneous Lam\'e problem can be reduced to that of the half-Laplacian. Finally, in Section \ref{sec:higher_reg} we invoke the regularity results for the half-Laplacian to prove the analogous statements (Theorems \ref{thm:opt_reg}-\ref{thm:Lame_sing_set}) for the Lam\'e problem.

\subsection{Notation}
In the following sections we will mainly use rather standard notation. We however point out the following conventions:
\begin{itemize}
\item For $\R^n_+:=\{x\in \R^n: \ x_n>0\}$ we set $B_1^+(x_0):=\{x\in \R^n_+ : \ |x- x_0|<1 \}$; for $x_0 \in \R^{n-1}\times \{0\}$, $B_1'(x_0):=\{x\in \R^{n-1}\times \{0\}: \ |x-x_0|<1\}$. For convenience of notation, we also identify $\R^{n-1}\times \{0\}$ with $\R^{n-1}$ and write $B_1^+:= B_1^+(0)$ as well as $B_1':=B_1'(0)$.
\item We use both the notation $C^{k,\alpha}$ and $C^{k+\alpha}$ for the Hölder spaces with $k\in \N_0$ and $\alpha \in (0,1]$. With slight abuse of notation, in our reference to function spaces, we mostly suppress the image spaces. For instance we use the notation $H^1(B_1^+)$ both for vector and scalar valued function of the corresponding regularity if there is no confusion possible.
\item The Fourier transform of a function $u:\R^n \rightarrow \R$ is denoted by $\hat{u}$. With slight abuse of notation, if there is no possibility of misunderstanding, we do not distinguish between the tangential and full, tangential and normal Fourier transforms in our notation. 
\item We recall that a \emph{weak solution} $u:B_1^+ \rightarrow \R^n$ to \eqref{eq:min1}--\eqref{eq:min2} refers to a solution of the corresponding variational inequality \cite{DL76, Schumann1}, i.e. $u\in H^1(B_1^+; \R^n)$ is such that for all $\zeta \in \mathcal{K}_0$ with $\zeta-u=0$ on $\p B_1\cap \{x_n>0\}$ it holds
\begin{align*}
a(u,\zeta- u) \geq (F,\zeta- u)_{L^2(B_1^+)},
\end{align*}
where for $u,v \in H^1(B_1^+;\R^n )$
\begin{align*}
a(u,v)&:= \int\limits_{B_1^+} 4\mu e(u): e(v) + \lambda \di(u) \di(v) dx , \\
e(u)&:= \frac{1}{2}(\nabla u + (\nabla u)^T), \ (A:B)_{ij} =A_{ij} B_{ij} \mbox{ for } A, B\in \R^{n\times n},
\end{align*} 
and $(u,v)_{L^2(B_1^+)}:= \sum\limits_{j=1}^{n} \int\limits_{B_1^+}  u^j v^j dx$. 
\item Similarly, a weak solution to obstacle problem for the half-Laplacian in $\R^{n-1}$ with obstacle $\psi \in L^2(\R^{n-1})$ is defined as a function $\tilde{v} \in H^{\frac{1}{2}}(\R^n)$ such that for all $\tilde{\zeta}\in H^{\frac{1}{2}}(\R^{n-1})$ with $\tilde{\zeta}\geq \psi$ a.e. in $\R^{n-1}$ it holds
\begin{align*}
\tilde{a}(\tilde{v}, \tilde{\zeta}-\tilde{v}) \geq 0,
\end{align*}
where $\tilde{a}(\tilde{u}_1,\tilde{u}_2) := \int\limits_{\R^{n-1}}(-\D)^{\frac{1}{4}} \tilde{u}_1 (-\D)^{\frac{1}{4}} \tilde{u}_2 dx$.
\end{itemize}

\section{Computation of the Auxiliary Operator}\label{sec:D-t-N}

We consider the following boundary contact problem 
\begin{align}
\label{eq:Lame}
\begin{split}
\mu \D u^j + (\mu + \lambda) \p_j (\di u) & = 0 \mbox{ in } \R^{n}_+,\ j \in\{1,\dots,n\},\\
\p_n u^j +  \p_j u^n & = 0 \mbox{ on } \R^{n-1} \times \{0\}, \ j \in \{1,\dots,n-1\},\\
u^n & = \varphi \mbox{ on } \R^{n-1} \times \{0\},\\
u(x)&\rightarrow 0 \mbox{ as } |x|\rightarrow \infty.
\end{split}
\end{align}
Following Schumann \cite{Schumann2}, we study an associated $\Psi$DO given by
\begin{align}\label{eq:DtN}
P \varphi := (-2 \mu \p_n u^n - \lambda \di u)|_{\R^{n-1} \times \{0\}}.
\end{align}
This problem should be viewed in analogy to the Caffarelli-Silvestre extension \cite{CS07}. It is a constant coefficient problem, so the full symbol of the operator $P$ can be computed by reducing the problem to a system of second order ODEs after Fourier transforming in the tangential directions. We present the details of this in the remainder of this section, following the computations in \cite{Schumann2}. 

We denote the bulk operator $\mu \D + (\mu + \lambda)\nabla \di  $ by $a(D', D_t)$, where $a(\xi)=\mu|\xi|^2 I_{n}+(\mu+\lambda)\xi\otimes\xi$ for $\xi\in \R^n$, $D_j=\frac{1}{i}\p_j$, $D'=\frac{1}{i}\nabla'$ and $t:=x_n$. Applying the Fourier transform in the tangential directions to \eqref{eq:Lame}, one obtains the following ODE system
\begin{align}\label{eq:ODE_Lame}
\begin{split}
a (\xi', D_t) \hat u(\xi',t)=0& \text{ for } t>0, \\
D_t \hat u^j (\xi',0)+ \xi_j \hat u^n(\xi', 0)=0,\quad \hat u^n(\xi',0)=\hat \varphi(\xi')& \text{ for } t=0.
\end{split}
\end{align}
We first compute the fundamental solution $W=(W_\ell^j)_{n\times n}$ of the ODE system 
\begin{align}
a(\xi', D_t) W_{\ell}(\xi', t)=0 \text{ for } t>0, \quad \xi'\neq 0
\end{align}
with prescribed Dirichlet data on $t=0$ and decay at $t\rightarrow \infty$, following the ideas of Schumann \cite{Schumann2}. Here $W_\ell=(W_\ell^1,\cdots, W_\ell^n)^T$ for $\ell\in \{1,\cdots, n\}$. 

To this end, we begin by considering the whole space fundamental solution to the (whole space) Lam\'e system. In complete (i.e. in tangential and normal) Fourier variables it is given by
\begin{align*}
\hat{F}^j_{\ell}(\xi) = \frac{\delta_{j \ell}}{\mu |\xi|^2} - \frac{(\lambda + \mu)}{(2\mu + \lambda)\mu} \frac{\xi_j \xi_{\ell}}{|\xi|^4}, \ j, \ell \in \{1,\dots,n\},
\end{align*}
cf. \cite[Proposition 10.14]{Mitrea13}.
We seek to transform this back in the normal direction $t$ with $t>0$. Using the residue theorem, we obtain that the corresponding solution for $t>0$ and $\xi'\neq 0$ is given by
\begin{align}
\label{eq:int_res}
\begin{split}
W^j_{\ell}(\xi', t) &= \int\limits_{\R} e^{it \xi_n} \hat{F}_{\ell}^j(\xi',\xi_n)\ d\xi_n =  \int\limits_{\Gamma^+(\xi')} e^{i t \xi_n} \hat F^{j}_{\ell}(\xi', \xi_n)\ d\xi_n \\
&= 2\pi i \Res\limits_{\xi_n=i|\xi'|} \left(e^{it\xi_n} \hat{F}^{j}_{\ell}(\xi',\xi_n)\right).
\end{split}
\end{align}
Here $\Gamma^+(\xi')$ is a positively oriented contour in the complex upper half-plane $\{\xi_n\in \mathbb{C}: \Imm \xi_n>0\}$ which encloses the point $\xi_n= i|\xi'|$ in which the complex continuation of $\hat{F}^j_{\ell}(\xi)$ ceases to be holomorphic as a function of $\xi_n \in \C$. Indeed, this contour is obtained as a deformation of the real axis by using the fact that due to $t>0$ and the decay of $\hat{F}^j_{\ell}(\xi)$ in $|\xi|\rightarrow \infty$ the integral over the boundary of an upper half-ball with increasing radius vanishes as the radius tends to infinity.

Invoking the residue theorem to compute this integral \eqref{eq:int_res}, i.e.
\begin{align*}
\frac{1}{2\pi i}W^{j}_\ell (\xi',t)&=\frac{1}{\mu}\Res\limits_{\xi_n=i|\xi'|}\left[ \frac{\delta_{j\ell}}{|\xi|^2} e^{it\xi_n} \right]-\frac{\lambda+\mu}{(2\mu+\lambda)\mu}\Res\limits_{\xi_n=i|\xi'|} \left[\frac{\xi_j\xi_\ell}{|\xi|^4 }e^{it\xi_n}\right]\\
&=\frac{1}{\mu}\lim_{\xi_n\rightarrow i|\xi'|}(\xi_n-i|\xi'|)\frac{\delta_{j\ell}}{|\xi|^2} e^{it\xi_n}  -\frac{\lambda+\mu}{(2\mu+\lambda)\mu}\lim_{\xi_n\rightarrow i|\xi'|} \frac{d}{d\xi_n}\left[(\xi_n-i|\xi'|)^2 \frac{\xi_j\xi_\ell}{|\xi|^4 }e^{it\xi_n}\right],
\end{align*}
we obtain
\begin{align}\label{eq:fundamental}
W(\xi', t) = 2\pi e^{-|\xi'| t} 
\begin{pmatrix} \frac{1}{2\mu |\xi'|}- \frac{\kappa \xi_1^2}{|\xi'|^3}- \frac{\kappa \xi_1^2 t}{|\xi'|^2} & \frac{-\kappa \xi_1 \xi_2}{|\xi'|^3} - \frac{\kappa \xi_1 \xi_2 t}{|\xi'|^2} &\cdots& \frac{-i \kappa \xi_1 t}{|\xi'|}\\
\frac{-\kappa \xi_1 \xi_2}{|\xi'|^3 } - \frac{\kappa \xi_1 \xi_2 t}{|\xi'|^2} &  \frac{1}{2\mu |\xi'|}- \frac{\kappa \xi_2^2}{|\xi'|^3}- \frac{\kappa \xi_2^2 t}{|\xi'|^2} & \cdots & \frac{-i \kappa \xi_2 t}{|\xi'|}\\
\cdots & \cdots & \cdots & \cdots\\
\frac{-i \kappa \xi_1 t}{|\xi'|} &\frac{-i \kappa \xi_{2} t}{|\xi'|} & \cdots& \frac{\nu}{|\xi'|} + \kappa t 
  \end{pmatrix},
\end{align}
where $\kappa=\frac{\lambda+\mu}{4\mu(2\mu+\lambda)}$ and $\nu=\frac{1}{2\mu}-\kappa$. 

We claim that there exist constants $C_1(\xi'),\cdots, C_n(\xi')$ such that \eqref{eq:ODE_Lame} is satisfied for $\hat u(\xi',t):=W(\xi',t)(C_1,\cdots, C_n)^T$. To see that this is the case, we first compute
\begin{align*}
\hat u(\xi', 0) = \frac{2\pi}{|\xi'|} 
\begin{pmatrix} \frac{1}{2\mu }- \frac{\kappa \xi_1^2}{|\xi'|^2} & \frac{-\kappa \xi_1 \xi_2}{|\xi'|^2} &\cdots& 0\\
\frac{-\kappa \xi_1 \xi_2}{|\xi'|^2 } &  \frac{1}{2\mu }- \frac{\kappa \xi_2^2}{|\xi'|^2} & \cdots &0\\
\vdots & \vdots & \vdots & \vdots\\
0&0 & \cdots&  \nu
\end{pmatrix}
\begin{pmatrix}
C_1(\xi') \\ C_2(\xi') \\ \vdots \\ C_n(\xi')
\end{pmatrix},
\end{align*}
and
\begin{align*}
\p_t \hat u(\xi', 0) 
&= - 2\pi
\begin{pmatrix} \frac{1}{2\mu }- \frac{\kappa \xi_1^2}{|\xi'|^2} & \frac{-\kappa \xi_1 \xi_2}{|\xi'|^2} &\cdots& 0\\
\frac{-\kappa \xi_1 \xi_2}{|\xi'|^2 } &  \frac{1}{2\mu }- \frac{\kappa \xi_2^2}{|\xi'|^2} & \cdots &0\\
\vdots & \vdots & \vdots & \vdots\\
0&0 & \cdots&  \nu
\end{pmatrix}
\begin{pmatrix}
C_1(\xi') \\ C_2(\xi') \\ \vdots \\ C_n(\xi')
\end{pmatrix}\\
& \quad + 2\pi
\begin{pmatrix}
\frac{-\kappa \xi_1^2}{|\xi'|^2}
 & \frac{-\kappa \xi_1 \xi_2}{|\xi'|^2} &\cdots& \frac{-i \kappa \xi_1}{|\xi'|}\\
\frac{-\kappa \xi_1 \xi_2}{|\xi'|^2 } &  - \frac{\kappa \xi_2^2}{|\xi'|^2} & \cdots &\frac{-i \kappa \xi_2}{|\xi'|}\\
\vdots & \vdots & \vdots & \vdots\\
\frac{-i \kappa \xi_1}{|\xi'|}& \frac{-i \kappa \xi_2}{|\xi'|} & \cdots&  \kappa
\end{pmatrix}
\begin{pmatrix}
C_1(\xi') \\ C_2(\xi') \\ \vdots \\ C_n(\xi')
\end{pmatrix}\\
& = - 2\pi
\begin{pmatrix} \frac{1}{2\mu } &  0 &\cdots& \frac{i \kappa \xi_1}{|\xi'|} \\
0&  \frac{1}{2\mu } & \cdots & \frac{i \kappa \xi_2}{|\xi'|} \\
\vdots & \vdots & \vdots & \vdots\\
\frac{i \kappa \xi_1}{|\xi'|}&\frac{i \kappa \xi_2}{|\xi'|} & \cdots&  \nu - \kappa
\end{pmatrix}
\begin{pmatrix}
C_1(\xi') \\ C_2(\xi') \\ \vdots \\ C_n(\xi')
\end{pmatrix}.
\end{align*}
From the equation $u^n = \varphi$ on $\R^{n-1} \times \{0\}$ we thus infer that
\begin{align*}
C_n(\xi') = \frac{|\xi'|}{2\pi \nu} \hat{\varphi}(\xi').
\end{align*}
The equations
\begin{align*}
\p_n u^j + \p_j u^n = 0 \mbox{ on } \R^{n-1}\times \{0\}
\end{align*}
turn into 
\begin{align*}
- \frac{1}{i}\left[ \frac{1}{2\mu} C_j(\xi') + \frac{i \kappa \xi_j}{|\xi'|} C_n(\xi') \right] + \frac{\nu \xi_j}{|\xi'|}C_n(\xi') = 0.
\end{align*}
This yields
\begin{align*}
C_j(\xi') = \frac{2\mu (\kappa - \nu)\xi_j}{i 
|\xi'|} C_n(\xi')
=  \frac{2 \mu (\kappa - \nu)\xi_j}{ 2\pi i \nu} \hat{\varphi}(\xi') \mbox{ for } j \in \{1,\dots,n-1\}.
\end{align*}
Using these observations, we seek to compute
\begin{align*}
P \varphi = -2 \mu \p_n u^n - \lambda \text{div}(u).
\end{align*}
To this end, we first note that for $j \in \{1,\dots,n-1\}$
\begin{align*}
\hat{u}^j(\xi',0)
&= \frac{2\pi }{ |\xi'|} \left( \frac{1}{2 \mu} C_j(\xi') - \frac{\kappa}{|\xi'|^2}\sum\limits_{\ell=1}^{n-1}\xi_j \xi_{\ell} C_{\ell}(\xi') \right)\\
&= \frac{2\pi }{ |\xi'|} \left( \frac{1}{2 \mu} C_j(\xi') - \frac{\kappa (\kappa - \nu)}{|\xi'|^2} \xi_j \frac{2 \mu}{2\pi i\nu} \hat{\varphi}(\xi)\sum\limits_{\ell=1}^{n-1}\xi_{\ell}^2 \right)\\
& = \frac{2\pi }{ |\xi'|} \left( \frac{1}{2\mu} C_j(\xi') - \kappa (\kappa-\nu) \xi_j \frac{2 \mu}{2\pi i \nu} \hat{\varphi}(\xi') \right)\\
& = \frac{2\pi }{|\xi'|}\left( \frac{(\kappa - \nu)}{2\pi i \nu} \xi_j - \frac{2 \mu \kappa (\kappa - \nu)}{2\pi i \nu} \xi_j \right) \hat{\varphi}(\xi')\\
& = \frac{\xi_j}{i|\xi'|}  \frac{(\kappa - \nu)}{\nu}(1-2\mu \nu) \hat{\varphi}(\xi').
\end{align*}
As a consequence,
\begin{align}
\label{eq:tangential_div}
\begin{split}
\sum\limits_{j=1}^{n-1} i \xi_j \hat{u}^j(\xi',0) = \sum\limits_{j=1}^{n-1}\frac{\xi_j^2}{|\xi'|}\frac{(\kappa - \nu)}{\nu}(1-2\mu \nu) \hat{\varphi}(\xi')
= |\xi'| \frac{(\kappa-\nu)}{\nu}(1-2\mu \nu) \hat{\varphi}(\xi').
\end{split}
\end{align}
Moreover,
\begin{align}
\label{eq:normal_div}
\begin{split}
\p_n \hat{u}^n(\xi',0) 
& = - 2\pi\left[ \sum\limits_{j=1}^{n-1} \frac{i \kappa \xi_j}{|\xi'|} C_j(\xi') + (\nu - \kappa) C_n(\xi') \right]\\
& = - 2\pi\left[ i \kappa \sum\limits_{j=1}^{n-1} \frac{\xi_j^2}{|\xi'|} \frac{2\mu}{2\pi i \nu} \hat{\varphi}(\xi')(\kappa - \nu) + (\nu - \kappa) \frac{ |\xi'|}{2\pi \nu} \hat{\varphi}(\xi') \right]\\
&= - \frac{(\kappa -\nu)(2\mu\kappa -1)}{\nu} |\xi'| \hat{\varphi}(\xi').
\end{split}
\end{align}
With the expressions \eqref{eq:tangential_div} and \eqref{eq:normal_div} in hand, we return to the computation of the operator $P$ for which we thus obtain
\begin{align*}
\widehat{P \varphi}(\xi') 
&= -(2\mu + \lambda) \p_n \hat{u}^n(\xi',0) - \lambda  \sum\limits_{j=1}^{n-1} i \xi_j \hat{u}^j(\xi',0)\\
&= (2\mu + \lambda) \frac{(\kappa - \nu)}{\nu}|\xi'|\hat{\varphi}(\xi')(2\mu\kappa-1) - \lambda \frac{(\kappa - \nu)}{\nu} (1-2\mu \nu) |\xi'| \hat{\varphi}(\xi')\\
&  =\frac{\kappa - \nu}{\nu} \left[ (2\mu + \lambda)(2\mu\kappa-1) - \lambda(1-2 \mu \nu) \right] |\xi'| \hat{\varphi}(\xi')\\
&=\frac{2\mu(\lambda+\mu)}{\lambda+2\mu}|\xi'|\hat{\varphi}(\xi').
\end{align*}

We summarize the discussion from above in the following Proposition.

\begin{prop}\label{prop:Lame_ext}
\begin{itemize}
\item [(i)] Given $\varphi\in H^{\frac{1}{2}}(\R^{n-1})$, there is a unique solution $u \in C^\infty_{loc}(\R^n_+)\cap \dot{H}^1(\R^n_+)$, $u(\cdot, x_n)\in H^{\frac{1}{2}}(\R^{n-1})$ for each $x_n>0$ to the boundary contact problem \eqref{eq:Lame} with $u^n(x',x_n)\rightarrow \varphi(x')$ in $H^{\frac{1}{2}}(\R^{n-1})$ as $x_{n}\rightarrow 0_+$. If additionally $\varphi\in C^{k+\alpha}_{loc}(\R^{n-1})$ for $k\geq 0$ and $\alpha\in (0,1)$, then $u\in C^{k+\alpha}_{loc}(\R^n_+\cup (\R^{n-1}\times \{0\}))$ up to the boundary.
\item [(ii)] The Dirichlet-to-Neumann map $P$ as given in \eqref{eq:DtN} is well-defined from $\dot{H}^{\frac{1}{2}}(\R^{n-1})$ to $\dot{H}^{-\frac{1}{2}}(\R^{n-1})$. The symbol of $P$ is 
\begin{align*}
\sigma(P)=c_{\lambda,\mu}|\xi'|,\quad c_{\lambda,\mu}=\frac{2\mu(\lambda+\mu)}{\lambda+2\mu}.
\end{align*}
\end{itemize}
\end{prop}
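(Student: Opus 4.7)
The plan is to treat the explicit Fourier-side construction carried out in the preceding pages as a simultaneous proof of (i) and (ii), and then to verify the claimed functional-analytic properties. Concretely, for $\varphi \in \dot{H}^{1/2}(\R^{n-1})$ I would define
\begin{align*}
\hat u(\xi',t) := W(\xi',t)(C_1(\xi'),\dots,C_n(\xi'))^T,
\end{align*}
with $C_n$ and $C_1,\dots,C_{n-1}$ given by the explicit formulas derived above in terms of $\hat\varphi$, and check directly that this $\hat u$ solves the Fourier-transformed ODE system \eqref{eq:ODE_Lame}: the bulk equation holds because $W_\ell$ is the fundamental solution, the condition $u^n(\cdot,0)=\varphi$ fixes $C_n$, and the remaining relations $\partial_n u^j+\partial_j u^n=0$ fix $C_j$ for $j<n$. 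The key structural observation is that each entry of $\hat u(\xi',t)$ takes the form $e^{-|\xi'|t}\bigl(a(\xi')+b(\xi')\,t|\xi'|\bigr)\hat\varphi(\xi')$ with $a,b$ homogeneous of degree $0$ in $\xi'$. Plancherel in the tangential variables, together with the elementary identity $\int_0^\infty t^{2m}e^{-2|\xi'|t}\,dt = C_m|\xi'|^{-2m-1}$, then yields $\|\nabla u\|_{L^2(\R^n_+)}^2 \lesssim \|\varphi\|_{\dot{H}^{1/2}(\R^{n-1})}^2$, while the trace identity $u^n(\cdot,0)=\varphi$ and the decay $u\to 0$ as $|x|\to\infty$ follow from the exponential factor $e^{-|\xi'|t}$ together with a density argument using Schwartz approximants of $\varphi$. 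Interior smoothness $u\in C^\infty_{loc}(\R^n_+)$ is immediate from the pointwise decay of $\hat u(\cdot,t)$ for each fixed $t>0$ combined with the ellipticity of $\mu\Delta+(\mu+\lambda)\nabla\di$.

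For uniqueness I would proceed variationally. Writing the Lam\'e operator in divergence form as $\di\sigma(w)$ with $\sigma(w)=2\mu e(w)+\lambda\,\di(w)\,I$, and supposing $w$ is a $\dot{H}^1$ solution of the homogeneous problem with decay at infinity, testing componentwise against $w$ and integrating by parts gives
\begin{align*}
\int_{\R^n_+}\bigl(2\mu|e(w)|^2+\lambda(\di w)^2\bigr)\,dx = -\int_{\R^{n-1}}\Bigl[\mu\sum_{j<n}(\partial_n w^j+\partial_j w^n)w^j+(2\mu\partial_n w^n+\lambda\,\di w)\,w^n\Bigr]\,dx'.
\end{align*}
Both terms on the right-hand side vanish in view of the boundary conditions $w^n=0$ and $\partial_n w^j+\partial_j w^n=0$ for $j<n$ on $\R^{n-1}\times\{0\}$, so Korn's inequality combined with the decay of $w$ forces $w\equiv 0$.

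The symbol formula and the mapping property $P\colon \dot{H}^{1/2}(\R^{n-1})\to\dot{H}^{-1/2}(\R^{n-1})$ in (ii) are then immediate from the identity $\widehat{P\varphi}(\xi')=c_{\lambda,\mu}|\xi'|\hat\varphi(\xi')$ already derived just above the proposition. The one point I expect to need slightly more care is the boundary H\"older statement in (i): for $\varphi\in C^{k+\alpha}_{loc}(\R^{n-1})$ I would first localize and reduce to a globally $C^{k+\alpha}$ datum, and then read off, from the Fourier representation, that each component of $u$ is a tangential convolution of $\varphi$ with a kernel of the form $\mathcal{F}^{-1}_{\xi'\to x'}\bigl[e^{-|\xi'|t}q(\xi',t)\bigr]$, with $q$ of order $\leq 0$ in $\xi'$ and at most linear in $t$. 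Such kernels are controlled by tangential derivatives of the classical half-space Poisson kernel composed with tangential Riesz transforms, so standard Schauder estimates for Poisson-type convolutions -- or equivalently the Lopatinskii-Shapiro boundary Schauder theory for the Lam\'e system with the complementing boundary conditions in \eqref{eq:Lame} -- deliver $C^{k+\alpha}$ regularity up to the flat boundary. This Schauder bookkeeping is the only genuinely technical step; the rest of the proof is a direct collation of the explicit computation above.
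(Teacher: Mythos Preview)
Your proposal is correct and follows essentially the same approach as the paper: both construct the solution via the explicit Fourier representation already computed, verify the $\dot{H}^1$ and $H^{1/2}$ bounds by Plancherel, read off the symbol of $P$ directly, and appeal to Agmon--Douglis--Nirenberg for the boundary H\"older regularity. The one substantive difference is uniqueness: the paper observes that the space of rigid displacements compatible with the boundary conditions and the decay is $\{0\}$ and then cites a result from Duvaut--Lions, whereas you carry out the underlying energy identity and invoke Korn directly. Your route is more self-contained and perfectly valid; just note that what you actually use is the characterization of the kernel of the symmetric gradient (infinitesimal rigid motions) together with decay, rather than a quantitative Korn inequality, and that the integration by parts needs the usual density/approximation justification for $\dot{H}^1$ solutions.
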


\begin{rmk}
\label{rmk:Lame_ext}
In the following, we will refer to the function $u$ from Proposition \ref{prop:Lame_ext} as the \emph{Lam\'e extension of $\varphi$}.
\end{rmk}

\begin{proof}
From the discussion above, we obtain that 
\begin{align*}
u^j(x)=\frac{1}{(2\pi)^{\frac{n-1}{2}}}\int_{\R^{n-1}} e^{i\xi'\cdot x'}W_{\ell}^j(\xi',x_n) C_{\ell}(\xi') \ d\xi',\quad j\in \{1,\cdots, n\},
\end{align*}
is a solution to \eqref{eq:Lame}.
From the expression for $W$ in \eqref{eq:fundamental} and the observation that 
\begin{align*}
C(\xi')=\left(\frac{2\mu(\kappa-\nu)\xi'}{2\pi i}\nu, \frac{|\xi'|}{2\pi \nu}\right)^T \hat \varphi(\xi') ,
\end{align*}
we deduce that
\begin{align*}
W(\xi', x_n) C(\xi')=e^{-|\xi'|x_n}
\begin{pmatrix}
\frac{-i(\lambda+\mu)\xi' x_n}{2\mu+\lambda} + \frac{i \mu \xi'}{(2\mu+\lambda)|\xi'|}\\
1+\frac{\lambda+\mu}{2\mu+\lambda}|\xi'|x_n
\end{pmatrix}
\hat \varphi(\xi').
\end{align*}
The integral converges absolutely for $x_n>0$. Moreover, by Cauchy-Schwarz and Plancherel one infers 
\begin{align*}
\|u(\cdot, x_n)\|_{H^{\frac{1}{2}}(\R^{n-1})}&\leq C\|\varphi\|_{H^{\frac{1}{2}}(\R^{n-1})}, \quad |u(x)|\leq C\|\varphi\|_{L^2(\R^{n-1})}|x_n|^{-\frac{n-1}{2}}, \quad x_n>0,\\
 \|\nabla u\|_{L^2(\R^n_+)}&\leq C \|\varphi\|_{\dot{H}^{\frac{1}{2}}(\R^{n-1})}
\end{align*}
for some $C=C(\lambda,\mu,n)$. Convergence of $u^n$ to the initial data $\varphi$ in $H^{\frac{1}{2}}(\R^{n-1})$ as $x_n\rightarrow 0_+$ is a consequence of the Plancherel identity. Note that the set of rigid displacements which satisfy the boundary conditions consists of
\begin{align*} 
 \mathcal{R}=\{v:v=a+b\wedge x,\ a, b\in \R^n;\ v^n=0 \text{ on } \R^{n-1}\times \{0\}, \ |v(x)|\rightarrow 0 \text{ as } |x|\rightarrow \infty\}=\{0\}.
\end{align*} 
Hence, by \cite[Chapter III, Theorem 3.3]{DL76} one can conclude that $u$ is the unique solution to the boundary contact problem \eqref{eq:Lame}. The up-to-the-boundary-regularity of $u$ in Hölder spaces follows from \cite{ADNII}. The remaining statements of the proposition follow from the previous computations.
\end{proof}

For later purposes we also need the up to the boundary regularity properties for solutions to the following problem
\begin{equation}\label{eq:Neumann_aux}
\begin{split}
\mu \D w^j + (\mu + \lambda) \p_j \di w & = f^j \mbox{ in } \R^n_+,\quad j\in\{1,\cdots, n\},\\
\p_n w^j + \p_j w^n & = g^j \mbox{ on } \R^{n-1} \times \{0\},\quad j\in \{1,\cdots, n-1\},\\
w^n & = 0 \mbox{ on } \R^{n-1}\times \{0\}.
\end{split}
\end{equation}
\begin{prop}
\label{prop:more_gen_prop_reg}
Given $f\in L^2(\R^n_+)$ and $g\in H^{\frac{1}{2}}(\R^{n-1})$, there exists a unique solution $w\in \dot{H}^2(\R^n_+)$ to \eqref{eq:Neumann_aux}. Moreover, if $f\in C^{k-1,\alpha}(B_1^+)$ and $g\in C^{k,\alpha}(B'_1)$ for $k\in \N$, $k\geq 1$, then $w\in C^{k+1,\alpha}_{loc}(B_1^+\cup B'_1)$. If $f$ is analytic in $B_1^+\cup B'_1$ and $g$ is analytic in $B'_1$, then $w$ is analytic in any (relatively) open set in $B_1^+\cup B'_1$.
\end{prop}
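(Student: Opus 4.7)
The plan is to split the proposition into its three claims—existence and uniqueness in $\dot{H}^2$, Schauder regularity, and analyticity—and to treat each via standard results for elliptic systems satisfying the Lopatinskii-Shapiro complementing condition, relying heavily on the fact that the ODE calculations of Section \ref{sec:D-t-N} already provide the essential ingredient.

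For existence and uniqueness, I would first reduce to the case $f = 0$ or $g = 0$ by a lift or simply keep the full problem and apply the Lax-Milgram theorem on the space
\[
V := \{v \in \dot{H}^1(\R^n_+;\R^n) : v^n = 0 \text{ on } \R^{n-1}\times\{0\}\},
\]
equipped with the bilinear form $a(u,v) = \int_{\R^n_+} 2\mu\, e(u){:}e(v) + \lambda\, \di(u)\,\di(v)\,dx$ and the right-hand-side functional $v \mapsto -\int f \cdot v\,dx + \mu \int_{\R^{n-1}} g\cdot v'\,dx'$, where $v' = (v^1,\dots,v^{n-1},0)$. Boundedness is immediate from the trace inequality and Cauchy-Schwarz, and coercivity follows from Korn's inequality on the half-space together with the vanishing normal trace: as in the proof of Proposition \ref{prop:Lame_ext}, the set of rigid displacements compatible with $v^n|_{\{x_n=0\}}=0$ is trivial. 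Integrating by parts identifies the Lax-Milgram solution as a weak solution to \eqref{eq:Neumann_aux}. To upgrade from $\dot{H}^1$ to $\dot{H}^2$, I would apply the standard tangential difference-quotient argument to bound $\p' w$ in $\dot{H}^1$ and then recover $\p_n^2 w$ from the equation itself, exploiting that $\mu \p_n^2 w^j + (\mu+\lambda)\p_j\p_n w^n$ (or the analogous normal contribution) can be solved for algebraically in terms of tangential derivatives and $f$.

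For the Schauder regularity, the Lamé operator $\mu\D + (\mu+\lambda)\nabla\di$ paired with the mixed boundary operators $\{(\p_n w^j + \p_j w^n)_{j<n},\, w^n\}$ constitutes an elliptic boundary value problem in the sense of Agmon-Douglis-Nirenberg. The complementing condition amounts to the unique solvability, in the space of decaying exponentials, of the boundary ODE system obtained by tangential Fourier transform with the prescribed mixed data—precisely the computation carried out in Section \ref{sec:D-t-N}, where this unique solvability was used to determine $C_1(\xi'),\dots,C_n(\xi')$ from $\hat{\varphi}$. With this verification in hand, the Schauder estimates of \cite{ADNII}, together with a standard interior-boundary covering and bootstrap, yield $w \in C^{k+1,\alpha}_{loc}(B_1^+\cup B_1')$ whenever $f \in C^{k-1,\alpha}(B_1^+)$ and $g \in C^{k,\alpha}(B_1')$.

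For analyticity on relatively open subsets of $B_1^+\cup B_1'$, I would invoke the classical analytic regularity theorem for elliptic boundary value problems with analytic (here constant) coefficients, flat analytic boundary, analytic data, and valid complementing condition; concretely, one iterates the Schauder estimates and deduces Cauchy-type bounds on arbitrary-order derivatives, giving a convergent Taylor series. The main technical hurdle is the complementing condition, but as noted above this is precisely what the explicit symbol calculation in Section \ref{sec:D-t-N} establishes, so the rest of the argument is entirely an application of standard elliptic theory.
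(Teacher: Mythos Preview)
Your overall strategy is sound, and the Schauder and analyticity parts are essentially equivalent to the paper's (both ultimately rest on the ADN complementing condition, which the Section~\ref{sec:D-t-N} ODE analysis verifies). The organization of the existence step, however, differs genuinely from the paper's, and your ``simply keep the full problem'' option has a gap.

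The paper does not use Lax--Milgram. Instead it decomposes \eqref{eq:Neumann_aux} into two sub-problems and treats them by completely different, more explicit means: for $f=0$, $g\neq 0$ it writes down the solution via the tangential Fourier transform and the fundamental matrix $W(\xi',t)$ already computed, obtaining an explicit kernel representation $\hat w^j = K^j_\ell \hat g^\ell$ (this also makes the boundary analyticity argument concrete via the Fourier characterization of analyticity and \cite{MN57}); for $f\neq 0$, $g=0$ it observes that the mixed condition is satisfied by the odd/even reflection $w^n\mapsto -w^n(x',-x_n)$, $w^j\mapsto w^j(x',-x_n)$, reducing to the whole-space Lam\'e system, where $\dot H^2$ solvability and analyticity are immediate (and one can even pass through $p=\di w$, which is harmonic). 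This decomposition sidesteps any coercivity or duality issue and gives formulas that are reused later.

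Your Lax--Milgram route is viable only if you take the ``lift'' branch you mention first. If you keep the full problem, the linear functional $v\mapsto -\int_{\R^n_+} f\cdot v\,dx$ with $f\in L^2(\R^n_+)$ is \emph{not} bounded on $\dot H^1(\R^n_+)$: the critical Sobolev embedding on the half-space only gives $\dot H^1\hookrightarrow L^{2n/(n-2)}$, and Cauchy--Schwarz does not close. You must first absorb $f$ by solving a whole-space (or reflected) problem to land in $\dot H^2$, then apply Lax--Milgram to the remaining homogeneous-interior problem with modified boundary data $g$---which is precisely the paper's decomposition in disguise. Once that is fixed, the rest of your outline (difference quotients for $\dot H^2$, ADN Schauder, iterated Schauder/Morrey--Nirenberg for analyticity) goes through.
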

\begin{proof}
We first consider solutions to 
\begin{equation}\label{eq:Neumann_aux_1}
\begin{split}
\mu \D w^j + (\mu + \lambda) \p_j \di w & = 0 \mbox{ in } \R^n_+,\quad j\in\{1,\cdots, n\},\\
\p_n w^j + \p_j w^n & = g^j \mbox{ on } \R^{n-1} \times \{0\},\quad j\in \{1,\cdots, n-1\},\\
w^n & = 0 \mbox{ on } \R^{n-1}\times \{0\}.
\end{split}
\end{equation} 
By a similar argument as for Proposition \ref{prop:Lame_ext} solutions to \eqref{eq:Neumann_aux_1} 
can be expressed in the form $\hat{w}^j(\xi',t) = W(\xi',t)(C_1,\dots,C_n)^T$, with $W(\xi',t)$ as above but where now for $j\in\{1,\dots,n-1\}$ the functions $C_j(\xi') = - \frac{\mu}{\pi} \hat{g}^j(\xi')$ and  $C_n(\xi')=0$. As above this provides the unique solvability of this problem with
\begin{align*}
w^j(x)=\frac{1}{(2\pi)^{\frac{n-1}{2}}}\int_{\R^{n-1}}e^{i\xi'\cdot x'} K^j_\ell (\xi',x_n) \hat g^\ell(\xi') \ d\xi',
\end{align*}
where $K^j_\ell(\xi',x_n)=- \frac{\mu}{\pi}  W^j_\ell(\xi',x_n)$. Noting that the mixed Dirichlet-Neumann boundary condition in \eqref{eq:Neumann_aux} is elliptic in the sense of \cite{ADNII}, one further obtains Schauder estimates up to the boundary. In particular, we thus infer that if $g \in C^{k,\alpha}(B_1')$, then $w \in C^{k+1,\alpha}(B_{r}^+\cup B'_r)$ for any $r\in (0,1)$. 

Moreover, if the data are analytic near and up to the boundary, solutions are (locally) analytic. Indeed, as the system is elliptic, it suffices to show $w^j |_{B'_r}$ is real analytic and then invoke \cite{MN57}. This can be directly read off from the expression of $\hat w^j(\xi',0)$ and the characterization of analyticity through the Fourier transform (see for instance \cite[Section 8.4]{HoermanderI}). This also persists for data which are locally analytic.

Finally, we remark that solutions to
\begin{equation}\label{eq:Neumann_aux1}
\begin{split}
\mu \D w^j + (\mu + \lambda) \p_j \di w & = f^j \mbox{ in } \R^n_+,\quad j\in\{1,\cdots, n\},\\
\p_n w^j + \p_j w^n & = 0 \mbox{ on } \R^{n-1} \times \{0\},\quad j\in \{1,\cdots, n-1\},\\
w^n & = 0 \mbox{ on } \R^{n-1}\times \{0\},
\end{split}
\end{equation}
can be reduced to solutions of the whole space Lam\'e problem by a reflection argument (reflecting $w^n$ oddly and $w^j$ for $j\in \{1,\dots,n-1\}$ evenly). In particular, again elliptic regularity estimates can be invoked implying that if $f^j\in C^{k,\alpha}_{loc}(\R^n_+\cup (\R^{n-1}\times \{0\}))$, then $w^j \in C^{k+2,\alpha}_{loc}(\R^n_+\cup (\R^{n-1}\times \{0\}))$. Similar observations hold for the propagation of analyticity. We remark that for the problem \eqref{eq:Neumann_aux1} this can be even reduced to the propagation of analyticity for the Laplacian: After the extension of the problem to the whole space Lam\'e problem, the function $p:= \di(w)$ satisfies Laplace's equation in the whole space with data $\di(f)$. Hence $p$ is analytic. Then solving the problem for $\D w^j$ and putting the divergence contribution onto the right hand side implies the desired regularity also for the functions $w^j$, $j\in \{1,\dots,n\}$.
\end{proof}

\begin{rmk}
An alternative argument yielding analyticity of the functions $w^j|_{B_r'}$, $\p_n w|_{B_r'}$ and $\di(w)|_{B_r'}$ from \eqref{eq:Neumann_aux_1} in the \emph{tangential} directions (which suffices for our regularity discussion in Section \ref{sec:higher_reg} below) can also be obtained through an application of the analytic implicit function theorem as in \cite{KRS16, KL12}. Yet another argument proceeds by bootstrapping regularity estimates as in \cite{K96}.
\end{rmk}

\section{Reductions}
\label{sec:reduc}

In this section,
we relate the thin obstacle problem for the  Lam\'e system on $B_1^+ \subset \R^n$ and the obstacle problem for the half-Laplacian on $\R^{n-1}$.
Here, on the one hand, the (strong form of the) thin obstacle problem for the isotropic, homogeneous Lam\'e problem on the unit ball reads
\begin{align}
\label{eq:Lame1}
\begin{split}
\mu \D u^j + (\mu + \lambda) \p_j \di (u) & = F^j \mbox{ in } B_1^+, \ j\in \{1,\cdots, n\},\\
\p_n u^j + \p_j u^n & = 0 \mbox{ on } B_1',\ j\in \{1,\cdots, n-1\},\\
u^n & \geq \varphi \mbox{ on } B_1',\\
-2\mu \p_n u^n - \lambda \di(u) &\geq 0 \mbox{ on } B_1',\\
u^n (2\mu \p_n u^n + \lambda \di(u)) &= 0 \mbox{ on } B_1'.
\end{split}
\end{align}
On the other hand, for a sufficiently regular obstacle $\psi$ with suitably decaying as $|x| \rightarrow \infty$, the (strong form of the) obstacle problem for the half-Laplacian is given by 
\begin{align}
\label{eq:obstacle_half}
\min\{(-\D)^{\frac{1}{2}} u(x), u(x)-\psi(x)\} = 0 \mbox{ for } x \in \R^{n-1}.
\end{align}

We begin by showing that a solution to \eqref{eq:obstacle_half} can be recast as a solution to \eqref{eq:Lame1}:

\begin{prop}
\label{prop:obst_to_Lame}
Let $u: \R^{n-1} \rightarrow \R$ with $u\in H^{\frac{1}{2}}(\R^{n-1})$ be a solution to \eqref{eq:obstacle_half} with the obstacle $\psi \in H^{\frac{1}{2}}(\R^{n-1})$. Let $w: \R^{n}_+ \rightarrow \R^n$ with $w \in \dot{H}^1(\R^{n}_+)$ be the Lam\'e extension of $u$ as in Proposition \ref{prop:Lame_ext}. Then, the function $w$ weakly (i.e. in the sense of a variational inquality, cf. \cite{DL76}) satisfies \eqref{eq:Lame1} in $B_1^+$ with $\varphi = \psi$ and $F^j=0$.
\end{prop}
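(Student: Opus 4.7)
The plan is to verify the variational inequality for \eqref{eq:Lame1} directly, exploiting the structural properties of the Lamé extension $w$ established in Proposition \ref{prop:Lame_ext} together with the complementarity satisfied by $u$. Denoting the admissible class by $\mathcal{K}_\psi := \{v \in H^1(B_1^+;\R^n) : v^n \geq \psi \text{ on } B_1'\}$, the weak form I need to verify is $a(w, v - w) \geq 0$ for every $v \in \mathcal{K}_\psi$ with $v - w = 0$ on $\p B_1 \cap \{x_n > 0\}$.

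First, I would check that $w|_{B_1^+} \in \mathcal{K}_\psi$. By Proposition \ref{prop:Lame_ext}, $w \in H^1(B_1^+;\R^n)$, is smooth in $\R^n_+$ and regular up to $\R^{n-1}\times\{0\}$; by construction it solves the homogeneous Lamé system, satisfies the tangential Neumann condition $\p_n w^j + \p_j w^n = 0$ on $\R^{n-1}\times\{0\}$ for $j \in \{1,\dots,n-1\}$, and has trace $w^n|_{\R^{n-1}\times\{0\}} = u$. Since $u \geq \psi$ by \eqref{eq:obstacle_half}, $w^n|_{B_1'} \geq \psi$ as required.

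Second, for an arbitrary admissible $v$, I would compute $a(w, v - w)$ via integration by parts. The bulk contribution is proportional to $-\int_{B_1^+} [\mu \D w^j + (\mu+\lambda)\p_j \di w](v^j - w^j)\,dx$ and vanishes since $w$ solves the homogeneous Lamé system in $\R^n_+$. The boundary integral on $\p B_1 \cap \{x_n > 0\}$ vanishes because $v = w$ there. On $B_1'$, whose outward unit normal is $-e_n$, the traction components indexed by $j \in \{1,\dots,n-1\}$ are proportional to $\p_n w^j + \p_j w^n$ and vanish by the tangential boundary condition, so only the normal component survives. Invoking the identification $-(2\mu \p_n w^n + \lambda \di w)|_{B_1'} = P u = c_{\lambda,\mu}(-\D)^{\frac{1}{2}} u$ of Proposition \ref{prop:Lame_ext}(ii), with $c_{\lambda,\mu} > 0$, one obtains up to a positive multiplicative constant
\begin{align*}
a(w, v - w) = c_{\lambda,\mu} \int_{B_1'} (-\D)^{\frac{1}{2}} u \,(v^n - u)\,dx'.
\end{align*}

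Finally, I would decompose $v^n - u = (v^n - \psi) + (\psi - u)$ and apply the complementarity relations from \eqref{eq:obstacle_half}, namely $(-\D)^{\frac{1}{2}} u \geq 0$ and $(-\D)^{\frac{1}{2}} u\,(u - \psi) = 0$, together with $v^n \geq \psi$ on $B_1'$, to deduce that the integrand is non-negative. This yields $a(w, v - w) \geq 0$, the desired weak form of \eqref{eq:Lame1}. The main technical point I anticipate is the rigorous interpretation of the boundary pairings and of $(-\D)^{\frac{1}{2}} u$ at the $H^{\frac{1}{2}}$--$H^{-\frac{1}{2}}$ level when $u$ is only in $H^{\frac{1}{2}}(\R^{n-1})$; these are to be read as duality pairings, with the complementarity understood variationally as a consequence of the weak formulation of the obstacle problem for the half-Laplacian.
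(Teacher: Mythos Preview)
Your proposal is correct and follows essentially the same route as the paper: both arguments rest on the observations that the Lam\'e extension $w$ solves the homogeneous system with the tangential Neumann conditions, that $w^n|_{B_1'}=u\geq\psi$, and that $-(2\mu\p_n w^n+\lambda\di w)|_{B_1'}=c_{\lambda,\mu}(-\D)^{1/2}u$ with $c_{\lambda,\mu}>0$, so the Signorini complementarity is inherited from \eqref{eq:obstacle_half}. The only difference is cosmetic: the paper simply records that the strong-form conditions in \eqref{eq:Lame1} are satisfied and declares $w$ a weak solution, whereas you spell out the integration-by-parts computation of $a(w,v-w)$ and the decomposition $v^n-u=(v^n-\psi)+(\psi-u)$ to verify the variational inequality explicitly; this extra detail is welcome given that the statement asks for the weak (variational-inequality) formulation.
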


\begin{proof}
The proof directly follows from the results from the previous section. By these, the Lam\'e extension $w$ of $u$ solves
\begin{align}
\label{eq:Lame2}
\begin{split}
\mu \D  w^j + (\mu + \lambda) \p_j \di w & = 0 \mbox{ in } B_1^+,\ j\in \{1,\cdots, n\},\\
\p_n w^j + \p_j w^n & = 0 \mbox{ on } B_1',\ j\in \{1,\cdots, n-1\},\\
w^n & = u \mbox{ on } B_1'.
\end{split}
\end{align}
Moreover, 
\begin{align*}
-2\mu \p_n w^n -\lambda \di w\big|_{\R^{n-1}\times\{0\}} = c(-\D)^{\frac{1}{2}}u.
\end{align*}
for some positive constant $c=c(n,\lambda, \mu)$.
As a consequence, the function $w$ is a solution to \eqref{eq:Lame1} with $\varphi = \psi$ in $B_1^+$ as claimed.
\end{proof}

Conversely, if $u: B_1^+ \rightarrow \R^n$ with $u\in H^1(B_1^+)$ is a solution to \eqref{eq:Lame1}, then up to solving a linear problem with better regularity, it is possible to reduce the Lam\'e problem to the obstacle problem for the half Laplacian. It is this direction which we mainly employ in our analysis of the free boundary problem for the isotropic, homogeneous Lam\'e system.

\begin{prop}
\label{prop:Lame_to_obst}
Let $u\in H^1(B_1^+)$ be a solution to \eqref{eq:Lame1} with the obstacle $\varphi \in H^{\frac{1}{2}}(B_1')$ and inhomogeneity $F\in L^2(B_1^+)$.  Let $\eta: \R^n \rightarrow \R$, $\eta(x)=\eta(|x|)\geq 0$ be a smooth cut-off function with $\eta=1$ in $B_r(x_0)\subset B_1$ for some $r\in (0,1)$ and $x_0\in B'_1$, and $\eta=0$ outside $B_1$.
Then there exists a function $\bar{w} \in \dot{H}^{\frac{1}{2}}( \R^{n-1})$ such that $\tilde{v}(x'):=u^n(x',0)\eta(x',0) - \bar{w}(x') \in \dot{H}^{\frac{1}{2}}(\R^{n-1})$ is a weak solution to the obstacle problem for $(-\Delta)^{\frac{1}{2}}$ with the obstacle $\psi:=\varphi\eta- \bar{w} \in \dot{H}^{\frac{1}{2}}(\R^{n-1})$, i.e. it weakly satisfies
\begin{align*}
\min\{(-\D)^{\frac{1}{2}} \tilde{v}(x'), \tilde{v}(x')-\psi(x') \} \geq 0, \ x' \in \R^{n-1}.
\end{align*}
In particular, $\Lambda_{\tilde{v}}\big|_{\supp(\eta)}=\Lambda_u \big|_{\supp(\eta)}$, where $\Lambda_{\tilde{v}}:=\{x'\in \R^{n-1}: \tilde v(x')=\psi(x')\}$.
\end{prop}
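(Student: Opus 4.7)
The strategy is to localize the Lamé problem with the cutoff $\eta$, subtract an auxiliary linear solution (supplied by Proposition \ref{prop:more_gen_prop_reg}) to reduce it to a pure Lamé extension, and then invoke the explicit symbol of the Dirichlet-to-Neumann map from Proposition \ref{prop:Lame_ext} to identify the resulting problem with an obstacle problem for the half-Laplacian. The function $\bar{w}$ will be obtained at the end as the correction required to absorb the ``commutator'' errors coming from the cutoff.

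First I set $v := u\eta$, extended by zero to $\R^n_+$. Since $\eta=\eta(|x|)$ satisfies $\p_n \eta|_{x_n=0}=0$, a direct computation gives
\begin{align*}
\mu\D v^j+(\mu+\lambda)\p_j \di(v) &= f^j \text{ in } \R^n_+,\\
\p_n v^j+\p_j v^n &= u^n \p_j\eta =: g^j \text{ on } \R^{n-1}\times\{0\}, \ j<n,
\end{align*}
with $f^j=\eta F^j$ plus commutator terms involving $\nabla u\cdot\nabla\eta$ and $u\,\nabla^2\eta$, all of which lie in $L^2(\R^n_+)$ by the $C^{1,\al}$ regularity of $u$ from \cite{Schumann1}. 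Proposition \ref{prop:more_gen_prop_reg} then produces $w\in \dot H^2(\R^n_+)$ solving the same system with the additional Dirichlet condition $w^n=0$ on $\R^{n-1}\times\{0\}$, so $\tilde u := v-w$ solves the homogeneous mixed problem \eqref{eq:Lame} with $\tilde u^n|_{x_n=0} = u^n\eta|_{x_n=0}$. Proposition \ref{prop:Lame_ext}(ii) then yields
\begin{align*}
(-2\mu\p_n\tilde u^n - \lambda\di(\tilde u))|_{x_n=0} = c_{\lambda,\mu}(-\D)^{\frac{1}{2}}(u^n \eta).
\end{align*}

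On the other hand, a direct expansion using $\p_n\eta|_{x_n=0}=0$ gives
\begin{align*}
(-2\mu\p_n v^n - \lambda\di(v))|_{x_n=0} = \eta(-2\mu\p_n u^n - \lambda\di(u)) - \lambda\sum_{j<n} u^j\p_j\eta.
\end{align*}
Equating the two expressions for $(-2\mu\p_n v^n - \lambda\di(v))|_{x_n=0} = c_{\lambda,\mu}(-\D)^{\frac{1}{2}}(u^n\eta) + (-2\mu \p_n w^n - \lambda \di(w))|_{x_n=0}$ and defining $\bar w \in \dot H^{\frac{1}{2}}(\R^{n-1})$ by the half-Laplacian equation
\begin{align*}
c_{\lambda,\mu}(-\D)^{\frac{1}{2}}\bar w = -\lambda\sum_{j<n} u^j\p_j\eta - (-2\mu\p_n w^n - \lambda\di(w))|_{x_n=0},
\end{align*}
one arrives at the key identity
\begin{align*}
c_{\lambda,\mu}(-\D)^{\frac{1}{2}}(u^n\eta - \bar w) = \eta(-2\mu\p_n u^n -\lambda \di(u)) \geq 0,
\end{align*}
the last inequality being the Signorini condition on $u$ weighted by the nonnegative $\eta$.

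The obstacle inequality $\tilde v \geq \psi$ reduces to $(u^n-\varphi)\eta \geq 0$, which holds since $u^n \geq \varphi$ and $\eta \geq 0$. At any point where $\tilde v > \psi$ one must have both $u^n > \varphi$ and $\eta > 0$, and there the Signorini complementarity condition on $u$ forces $\eta(-2\mu\p_n u^n - \lambda\di(u))=0$, whence $(-\D)^{\frac{1}{2}}\tilde v = 0$. Combined, these yield the variational formulation of the obstacle problem for $\tilde v$ with obstacle $\psi$, and the equality of contact sets on $\supp(\eta)$ is immediate from $\tilde v - \psi = (u^n-\varphi)\eta$. The main technical point I anticipate is the careful verification that $\bar w$ indeed sits in $\dot H^{\frac{1}{2}}(\R^{n-1})$: the right hand side defining it is the sum of a compactly supported $C^{1,\al}$ piece (from $u^j\p_j\eta$) and a boundary trace of first derivatives of a $\dot H^2(\R^n_+)$ function, and one must check that the $(-\D)^{-\frac{1}{2}}$-preimage lands in the correct homogeneous Sobolev space. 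All identities above are to be interpreted in the weak variational sense, since $u$ is only $C^{1,\al}_{loc}$ and the Signorini condition is a variational inequality.
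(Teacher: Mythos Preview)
Your proposal is correct and follows essentially the same route as the paper: localize by multiplying with $\eta$, subtract the auxiliary solution $w$ from Proposition \ref{prop:more_gen_prop_reg} to reduce to the homogeneous Lam\'e extension, apply the symbol computation of Proposition \ref{prop:Lame_ext} to identify the DtN map with $c_{\lambda,\mu}(-\D)^{1/2}$, and finally absorb the commutator term $-\lambda\sum_{j<n}u^j\p_j\eta$ and the boundary trace $2\mu\p_n w^n+\lambda\di(w)$ into $\bar w$ via $(-\D)^{-1/2}$. The only cosmetic differences are your choice of variable names ($v,\tilde u$ in place of the paper's $\bar u,v$) and your explicit tracking of the constant $c_{\lambda,\mu}$; also, the $L^2$ bound on the commutator $f^j$ already follows from $u\in H^1(B_1^+)$, so invoking \cite{Schumann1} there is harmless but unnecessary.
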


\begin{proof}
\emph{(i) Extension.} We begin by defining $\bar u:= u\eta$, where $\eta(x)=\eta(|x|)$ is a smooth cut-off function which is equal to one in $B_r(x_0) \Subset B_1$ for some $r\in (0,1)$ and vanishes outside $B_1$.
Then, $\bar u$ solves the equations
\begin{align}\label{eq:u_cutoff}
\begin{split}
\mu \D \bar u^j + (\mu + \lambda) \p_j \di \bar u & = f^j \mbox{ in } \R^n_+, \quad j\in \{1,\cdots, n\},\\
\p_n \bar u^j + \p_j \bar u^n & = g^j\mbox{ on } \R^{n-1} \times \{0\},\quad j\in \{1,\cdots, n-1\},\\
\bar u^n &= u^n \eta \mbox{ on } \R^{n-1} \times \{0\},
\end{split}
\end{align}
where 
\begin{align*}
f^j &=  F^j\eta+2 \mu \nabla u^j \cdot \nabla \eta  + \mu u^j \D \eta + (\mu + \lambda) \sum_{k=1}^{n}\left(u^k \p_j \p_k \eta+ \p_j u^k \p_k \eta + \p_j \eta \p_k u^k\right) , \\
g^j &=  u^n \p_j \eta.
\end{align*}
Moreover,  we have that
\begin{align}\label{eq:u_cutoff_bdry}
\begin{split}
\bar u^n\geq \varphi\eta, \quad -2 \mu \p_n \bar u^n - \lambda \di \bar u &\geq  h \text{ on } \R^{n-1}\times \{0\},\\
 -2 \mu \p_n \bar u^n - \lambda \di \bar u &=h \text{ in } (\R^{n-1}\times \{0\})\cap \{\bar u^n>\varphi\eta\},
\end{split}
\end{align}
where $h= - \lambda \sum_{k=1}^{n-1}u^k \p_k \eta \big|_{\R^{n-1}\times \{0\}}$. In the above computations we have used that $\p_n \eta(x',0)=0$.
For $F^j \in L^2(B_1^+)$ and $u\in H^1(B_1^+)$ as in \eqref{eq:Lame1}, we immediately infer that $f^j \in L^2(\R^n_+)$, $g^j \in H^{\frac{1}{2}}(\R^{n-1})$ and $h\in H^{\frac{1}{2}}(\R^{n-1})$.

\emph{(ii) Auxiliary problem.} We next consider the following auxiliary problem
\begin{equation}\label{eq:Neumann}
\begin{split}
\mu \D w^j + (\mu + \lambda) \p_j \di w & = f^j \mbox{ in } \R^n_+,\quad j\in\{1,\cdots, n\},\\
\p_n w^j + \p_j w^n & = g^j \mbox{ on } \R^{n-1} \times \{0\},\quad j\in \{1,\cdots, n-1\},\\
w^n & = 0 \mbox{ on } \R^{n-1}\times \{0\}.
\end{split}
\end{equation}
with decay at $|x| \rightarrow \infty$.
By Proposition \ref{prop:more_gen_prop_reg}, this problem (with decay as $x_n \rightarrow \infty$) is uniquely solvable yielding solutions $w\in \dot{H}^2(\R^n_+)$. Further the Fourier characterization from Proposition \ref{prop:more_gen_prop_reg} also implies that $w^j \in \dot{H}^1(\R^{n-1})$ for $j\in \{1,\dots,n-1\}$ and $\p_n w^n \in L^2(\R^{n-1})$. 

We next observe that, as a consequence, the function $v:= \bar u-w \in \dot{H}^1(\R^n_+)$ satisfies
\begin{align*}
\mu \D v^j + (\mu + \lambda) \p_j \di v & = 0 \mbox{ in } \R^n_+,\quad j\in \{1,\cdots, n\}\\
\p_n v^j + \p_j v^n & = 0 \mbox{ on } \R^{n-1} \times \{0\}, \quad j\in \{1,\cdots, n-1\}\\
v^n & = u^n \eta  \geq \varphi\eta  \mbox{ on } \R^{n-1} \times \{0\},\\
-2 \mu \p_n v^n - \lambda \di v& \geq  \tilde{h} \mbox{ on } \R^{n-1} \times \{0\},\\
-2 \mu \p_n v^n - \lambda \di v& = \tilde{h} \text{ on } \{v^n>\varphi\eta\} \cap (\R^{n-1}\times\{0\}).
\end{align*}
Here $\tilde{h} = h + 2\mu \p_n w^n + \lambda \di(w) \in L^{2}(\R^{n-1})$ where we used that $w^j \in \dot{H}^1(\R^{n-1})$ for $j \in \{1,\dots,n-1\}$ and $\p_n w^n \in L^2(\R^{n-1})$.
By the results from Section \ref{sec:D-t-N} we have that $\bar{v}(x'):=v^n(x',0) $ is a weak solution to
\begin{align*}
\bar{v} \geq \varphi\eta , \ (-\D)^{\frac{1}{2}} \bar{v} \geq \tilde{h} \mbox{ in } \R^{n-1}, \
(-\D)^{\frac{1}{2}} \bar{v} = \tilde{h} \mbox{ in } \R^{n-1} \cap \{x'\in \R^{n-1}: \ \bar{v}(x')>\varphi\eta \}.
\end{align*}
Finally, denoting $\bar{w}(x') = (-\D_{x'})^{-\frac{1}{2}} \tilde{h}(x')\in \dot{H}^{1}(\R^{n-1})$ and setting $\tilde{v}(x'):= \bar v(x')-\bar{w}(x')$ and $\psi:=\varphi\eta-\bar{w}$ then concludes the argument.
\end{proof}

\begin{rmk}
\label{rmk:variations}
We remark that it is possible to carry out the above reduction by means of slight variations of the auxiliary problem \eqref{eq:Neumann} which is used in the proof of Proposition \ref{prop:Lame_to_obst}.
\end{rmk}

\section{Proofs of Theorems \ref{thm:opt_reg}-\ref{thm:Lame_sing_set}}
\label{sec:higher_reg}

In this section, we present the proofs of our main results. To this end, we use the reduction arguments from the previous sections to deduce regularity results for the Signorini problem for the Lam\'e system from corresponding ones for the obstacle problem for the half-Laplacian, after starting with some minimal, non-optimal local Hölder regularity for the Lam\'e problem. For the convenience of the reader, we first collect these facts which we will then use in the sequel to provide the arguments for Theorem \ref{thm:opt_reg}-\ref{thm:Lame_sing_set}. Recalling the fact that by virtue of the Cafferelli-Silvestre extension \cite{CS07}, when suitably localized, the obstacle problem for the half-Laplacian is equivalent to the Signorini problem for the Laplacian, we will not distinguish between references for these two problems below.

\begin{itemize}
\item[(i)] Solutions $u$ to the minimization problem \eqref{eq:min1}--\eqref{eq:min2} with $F\in L^p(B_1^+)$ for $p>2n$ are $C^{1,\alpha}_{loc}(B_1^+ \cup B_1')$ regular for some $\alpha>0$ (see \cite{Schumann1}).
\item[(ii)] Solutions $\tilde{v}$ to the obstacle problem \eqref{eq:obstacle_half} enjoy the optimal $C^{1,1/2}_{loc}(B_1')$ regularity if $\psi \in C^{1,\gamma}(B_1')$ for some $\gamma >\frac{1}{2}$ \cite[Theorem 4]{RS17} (see also \cite[Section 1.3.1]{KRS16} relating the nonlocal obstacle problem for the fractional Laplacian to an analogous local Signorini problem for a (degenerate) elliptic operator in the upper-half space). For any $r\in (0,1)$ the free boundary $\Gamma_{\tilde{v}}\cap B_{r}'$ is given by $\p \{x\in B_r': \tilde{v}(x)=\psi(x)\}$. At a free boundary point $x_0 \in \Gamma_{\tilde{v}}\cap B_{r}'$ the solution $\tilde{v}-\psi$ has a well-defined order of vanishing. For $\gamma = 1-\epsilon$ it is either equal to $3/2$ or larger than or equal to $2-\epsilon$ (see \cite[Theorem 5]{RS17} for the $C^{1,\gamma}$-regular obstacle and \cite[Chapter 9]{PSU} for the flat obstacle).
The free boundary $\Gamma_{\tilde{v}}\cap B_{r}$ separates into the \emph{regular} free boundary and its complement (see \cite[Chapter 9]{PSU}). At the regular free boundary the solution vanishes of order $3/2$. For a $C^{\theta+\frac{1}{2}}_{loc}(\R^{n-1})$ obstacle with $\theta>0$, $\theta, \theta+\frac{1}{2}\notin \N$, the regular free boundary is locally a $C^{\theta}$ regular manifold \cite{AR20}.
\item[(iii)] For a solution $\tilde{v}$ to the obstacle problem for the fractional Laplacian the singular set $\Sigma(\tilde{v})$ is defined in analogy to \eqref{eq:sing_bdry}: Indeed, $x_0 \in \Sigma(\tilde{v}) $, iff
\begin{align}
\label{eq:sing_bdry_scalar}
\lim\limits_{r \rightarrow 0} \frac{\mathcal{H}^{n-1}(\Lambda_{\tilde{v}} \cap B_{r}'(x_0))}{\mathcal{H}^{n-1}(B_r'(x_0))}=0, \ B_r'(x_0)\subset B_1'(0).
\end{align}
It is known \cite{GP09, CSV20} that if $\varphi \in C^{\infty}$, then the blow-up $p_{x_0}$ of $\tilde{v}-\varphi$ is a harmonic polynomial of order $2m$ with $m\in \N$. The singular free boundary decomposes as
\begin{align*}
\Sigma(\tilde{v}) = \bigcup\limits_{d=0}^{n-2} \Sigma_d(\tilde{v}),
\end{align*}
where 
\begin{align*}
\Sigma_d(\tilde{v}) = \{x_0 \in \Sigma(\tilde{v}): \dim \{\xi \in \R^{n-1}: \ \sum\limits_{j=1}^{n-1}\xi_j \p_{x_j} p_{x_0}(x',0) = 0 \mbox{ for all } x'\in \R^{n-1}\} = d \}.
\end{align*}
 The sets $\Sigma_d(\tilde{v})$ are contained in a coutable union of $d$-dimensional $C^{1,log}$ manifolds. 
\end{itemize}

Using the relation between the Signorini problem for the Lam\'e system and the obstacle problem for the half-Laplacian (see Proposition \ref{prop:Lame_to_obst}), as well as the observation (i) from above, we first prove Theorem \ref{thm:opt_reg}, which concerns the optimal regularity of the solution.
\begin{proof}[Proof of Theorem \ref{thm:opt_reg}]
Let $u$ be a solution to \eqref{eq:Lame_EL}--\eqref{eq:Lame_EL2} in $B_1^+$ with $F\in L^p(B_1^+;\R^n)$ and $p>2n$. Let $\bar u:=u\eta$, where $\eta$ is a smooth, radial cut-off function whose support is in $B_1$ and which is equal to one in $B_r$ for some $r\in (0,1)$. Then $\bar u$ solves \eqref{eq:u_cutoff}--\eqref{eq:u_cutoff_bdry} with compactly supported data $f^j, g^j, h$ as in the proof of Proposition \ref{prop:Lame_to_obst}. Using that by assumption $F\in L^p$ and that $u\in C^{1,\alpha}_{loc}(B_1^+\cup B'_1)$ for some $\alpha\in(0,1)$ (cf. \cite{Schumann1}), we infer that $f\in L^p(\R^n_+)$, $g\in C^{1,\alpha}(\R^{n-1})$ and $h\in C^{1,\alpha}(\R^{n-1})$. We write $\bar u=w+v$, where $w$ solves the auxiliary problem \eqref{eq:Neumann}. By the $W^{2,p}$ regularity results for the (linear)  boundary value problem \eqref{eq:Neumann} (which follows from the characterization of the operator in Proposition \ref{prop:more_gen_prop_reg} and for instance \cite{ADNII}), one has that $w\in W^{2,p}_{loc}(\R^n_+\cup (\R^{n-1}\times\{0\}))\hookrightarrow C^{1,\gamma}_{loc}(\R^{n}_+\cup (\R^{n-1}\times \{0\}))$ with $\gamma = 1-\frac{n}{p}\in (\frac{1}{2},1)$. By Proposition \ref{prop:Lame_to_obst}, $v^n\big|_{\R^{n-1}\times \{0\}}$ solves the obstacle problem for $(-\Delta)^{\frac{1}{2}}$ with the obstacle $\psi = -\overline{w} = -(-\Delta_{x'})^{-\frac{1}{2}}(-\lambda \sum_{k=1}^{n-1}u^k\p_k \eta + 2\mu \p_n w^n + \lambda \di w) \in C^{1,\gamma}_{loc}(\R^{n-1})$ (see for instance \cite[Chapter 5]{T07} for the mapping properties of $(-\D)^{- \frac{1}{2}}$ in Hölder-Zygmund spaces or \cite[Proposition 2.8]{S07}). Then we infer from the known regularity results for the thin obstacle problem with $C^{1,\gamma}$ obstacle (cf. \cite{RS17}) that $v^n(x',0)\in C^{1,\frac{1}{2}}_{loc}(\R^{n-1})$. Thus, $v\in C^{1,\frac{1}{2}}_{loc}(\R^n_+\cup (\R^{n-1}\times \{0\}))$ by the up to the boundary regularity for the boundary contact problem \eqref{eq:Lame}, cf. Proposition \ref{prop:Lame_ext}. This together with the regularity of $w$ implies that $u$ is $C^{1,\frac{1}{2}}_{loc}$ up to the boundary. 
\end{proof}

We next approach the regularity of the regular free boundary for the Signorini problem for the Lam\'e system. Using the Dirichlet-to-Neumann map associated to the boundary contact problem \eqref{eq:Lame}, we reduce the results to the analogous ones for the half-Laplacian.

We begin by recalling the notion of a regular point for the Signorini problem for the Lam\'e system. 

\begin{defi}\label{def:reg_fbp}
Let $e_n = (0,\dots,0,1)\in \R^n$ denote the $n$-th unit vector.
Let $u\in H^1(B_1^+)$ be a solution to \eqref{eq:Lame_EL}--\eqref{eq:Lame_EL2}.
A point $x_0 \in \Gamma_{u}$ is called a \emph{regular free boundary point} if for an affine solution $p_1$ of the Lam\'e system and for some $\xi \in \R^{n-1} \setminus \{0\}$
\begin{align*}
\lim\limits_{r \rightarrow 0} \frac{(u-p_1)(x_0+rx)}{r^{\frac{3}{2}}} = p_{\frac{3}{2}}^{\xi}(x).
\end{align*}
Here $p_{\frac{3}{2}}^{\xi}(x) = |\xi|p_{\frac{3}{2}}(\frac{\xi}{|\xi|}\cdot x', x_n)$, where for $\hat{e}\in \mathbb{S}^{n-1}\cap\{x_n=0\}$,  $p_{\frac{3}{2}}(\hat{e}\cdot x',x_n):\R^n_+\rightarrow \R^n$ is the $\frac{3}{2}$-homogeneous solution to the Lam\'e system, which satisfies 
\begin{align*}
p_{\frac{3}{2}}(\hat{e}\cdot x', x_n)\cdot \tilde{e} =0, \text{ for all $\tilde{e}$ such that } \tilde{e}\perp \text{span}\{\hat{e}, e_n\},
\end{align*}
and in polar coordinates $x'\cdot \hat{e}=r\cos(\theta)$, $x_n=r\sin(\theta)$,
\begin{align*}
p_{\frac{3}{2}}(\hat{e}\cdot x', x_n)\cdot e & = r^{\frac{3}{2}}\left(\frac{3\mu+\lambda + \frac{1}{2}(\mu+\lambda)}{4\mu(1+\frac{1}{2})}\cos(\frac{3}{2}\theta)-\frac{\mu+\lambda}{4\mu}\cos(\frac{1}{2}\theta)\right), \\
p_{\frac{3}{2}}(\hat{e}\cdot x', x_n)\cdot e_n &=r^{\frac{3}{2}}\left(\frac{3\mu+\lambda - \frac{1}{2}(\mu+\lambda)}{4\mu(1+\frac{1}{2})}\sin(\frac{3}{2}\theta)-\frac{\mu+\lambda}{4\mu}\sin(\frac{1}{2}\theta)\right).
\end{align*}
\end{defi}

We emphasize that we allow for affine off-sets in our definition of the regular free boundary. Moreover, if $u\in C^{1,\alpha}_{loc}(B_1^+\cup B'_1)$ one has $p_1(x)=-u(x_0)-\nabla u(x_0)\cdot x$.  In \cite{And13} it is shown that when $F=0$, those free boundary points with vanishing order less than $2$ (cf. \eqref{eq:vanishing_u}) are regular. In the next lemma we show that also for a general $F\in L^\infty$ (but for simplicity $\varphi=0$) the vanishing order condition indeed characterizes the regular free boundary points, as in the obstacle problem for $(-\Delta)^{\frac{1}{2}}$.

\begin{lem}
\label{lem:reg_point}
\begin{itemize}
\item [(i)] Let $u\in H^1(B_1^+)$ be a solution to \eqref{eq:Lame1} with $F\in L^\infty(B_1^+)$. Then $x_0 \in \Gamma_u$ is a regular free boundary point if and only if up to an affine function the vanishing order of $u$ at $x_0$ is no larger than $2$, i.e.
\begin{align}\label{eq:vanishing_u}
\limsup_{r\rightarrow 0}\frac{\ln (r^{-\frac{n}{2}}\|u-p_1\|_{L^2(B_r^{+}(x_0))})}{\ln r}<2,
\end{align}
where $p_1(x)=-u(x_0)-\nabla u(x_0)\cdot x$.  
\item [(ii)] Let $\tilde{v}(x')$ be the function from Proposition \ref{prop:Lame_to_obst}, which solves the obstacle problem for $(-\Delta)^{\frac{1}{2}}$ on $\R^{n-1}\times \{0\}$. Then, $x_0\in \Gamma_{\tilde{v}}\cap B_1'$ is a regular free boundary point if and only if $x_0\in \Gamma_u\cap B_1'$ is a regular free boundary point. 
\end{itemize}
\end{lem}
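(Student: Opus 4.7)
My plan is to prove (ii) first---via Proposition \ref{prop:Lame_to_obst} and the uniqueness in Proposition \ref{prop:Lame_ext}---and then deduce (i) from (ii) together with the scalar Athanasopoulos--Caffarelli--Salsa (ACS) frequency gap for the thin obstacle problem for $(-\Delta)^{1/2}$. For (ii), in a neighbourhood of $x_0$ where $\eta \equiv 1$, Proposition \ref{prop:Lame_to_obst} yields $\tilde v - \psi = u^n - \varphi$ pointwise, so the contact sets coincide and any blow-up assertion on one side translates to one on the other. The decisive structural observation is that the Lamé extension of Proposition \ref{prop:Lame_ext} commutes with dilations and is uniquely determined by its $n$-th boundary trace: hence the $\tfrac{3}{2}$-homogeneous scalar regular profile $c_\xi \operatorname{Re}(((x - x_0)\cdot \hat\xi)_+^{3/2})$ lifts uniquely to the vector profile $p_{3/2}^\xi$ of Definition \ref{def:reg_fbp}, and conversely the $n$-th boundary trace of $p_{3/2}^\xi$ is (up to a positive constant) the scalar regular profile. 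The auxiliary functions $w$ and $\bar w$ appearing in the reduction are $C^{1,\gamma}$ with $\gamma > \tfrac{1}{2}$, so they contribute only to an affine correction at the $r^{3/2}$-blow-up scale, which is absorbed into $p_1$; this yields the one-to-one correspondence of regular points claimed in (ii).

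For (i), the forward direction is immediate from Definition \ref{def:reg_fbp}: the convergence $r^{-3/2}(u - p_1)(x_0 + r\cdot) \to p_{3/2}^\xi$ forces $r^{-n/2}\|u - p_1\|_{L^2(B_r^+(x_0))} \sim r^{3/2}$, so $\limsup = \tfrac{3}{2} < 2$. For the converse, using the $C^{1,\frac{1}{2}}_{loc}$ regularity of Theorem \ref{thm:opt_reg} together with the hypothesis, I would extract a non-trivial subsequential blow-up $u_0$ of $u - p_1$ via an $L^2$-normalisation $\phi(r) = r^{-n/2}\|u - p_1\|_{L^2(B_r^+(x_0))}$. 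Any such $u_0$ is a $\tfrac{3}{2}$-homogeneous solution of the homogeneous Signorini--Lamé problem on $\R^n_+$ (with $F = 0$, $\varphi = 0$), and applying Proposition \ref{prop:Lame_to_obst} to $u_0$ on the whole half-space yields a $\tfrac{3}{2}$-homogeneous solution $u_0^n|_{x_n = 0}$ to the homogeneous scalar thin obstacle problem for $(-\Delta)^{\frac{1}{2}}$. The ACS classification (cf.\ \cite[Chapter 9]{PSU}) identifies this trace as a positive multiple of the scalar regular profile; by Proposition \ref{prop:Lame_ext}(i) the Lamé extension then forces $u_0 = p_{3/2}^\xi$, whence by (ii) $x_0$ is a regular free boundary point.

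The principal technical hurdle is justifying both the compactness and the $\tfrac{3}{2}$-homogeneity of subsequential blow-ups $u_0$ in the vectorial setting, since no direct Almgren-type monotonicity formula is available for the Lamé system. I expect this step to rely on either Andersson's vectorial blow-up scheme \cite{And13} or, equivalently, on transferring the scalar Almgren monotonicity through Proposition \ref{prop:Lame_to_obst}. A secondary subtlety is to rule out the degenerate alternative $u_0^n|_{x_n = 0} \equiv 0$, for which the scalar reduction carries no information; this is ruled out by Proposition \ref{prop:Lame_ext}(i), since the Lamé extension of zero boundary data is trivial and would contradict any non-trivial $L^2$-normalisation of $u_0$.
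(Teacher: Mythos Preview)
Your overall strategy is close to the paper's, but the route you take for the converse in (i) has two genuine gaps that the paper handles differently.

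First, your claim that ``any such $u_0$ is a $\tfrac{3}{2}$-homogeneous solution'' is not what follows from the $L^2$-normalised blow-up, and it is not what Andersson's result provides either. Andersson's \cite[Proposition~7.1]{And13} (which the paper invokes verbatim) does \emph{not} yield homogeneity of the limit and then leave you to classify; it directly identifies the subsequential limit as $c\,p_{3/2}$. Your detour ``establish $\tfrac{3}{2}$-homogeneity, then apply ACS to $u_0^n|_{x_n=0}$, then lift via Proposition~\ref{prop:Lame_ext}'' is therefore both unnecessary and unjustified at the first step. Relatedly, your appeal to Proposition~\ref{prop:Lame_ext}(i) to rule out $u_0^n|_{x_n=0}\equiv 0$ does not apply: that uniqueness statement requires decay at infinity, which no nontrivial homogeneous profile has.

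Second, you do not explain how to pass from a \emph{subsequential} vectorial blow-up to the \emph{full} limit required by Definition~\ref{def:reg_fbp}; writing ``whence by (ii)'' is circular, since Definition~\ref{def:reg_fbp} itself asks for the full limit. The paper closes this gap by going through the scalar problem: from the subsequential convergence to $c\,p_{3/2}$ it argues (via the frequency gap of \cite{RS17}) that the vanishing order of $\tilde v-\psi$ at $x_0$ must be exactly $\tfrac{3}{2}$, so $x_0$ is a regular point for $(-\Delta)^{1/2}$; this gives local $C^{1,\beta}$ regularity of $\Gamma_{\tilde v}=\Gamma_u$, and then the optimal $C^{1,1/2}$ bound provides enough compactness that \emph{every} rescaling $r^{-3/2}u(r\cdot)$ converges, with the limit pinned down by the already-known trace $u^n(\cdot,0)$. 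Your argument for (ii) via the scaling-commutation of the Lam\'e extension is correct in spirit and matches how the paper ultimately identifies the full limit, but it only becomes effective once the scalar side has supplied the missing compactness.
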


\begin{proof}
Without loss of generality, we assume $x_0=0$ and $u(0)=|Du(0)|=0$. 

\emph{Proof for (i).} One direction is simple. Assume that $\lim_{r\rightarrow 0} \frac{u(rx)}{r^{3/2}}=cp_{3/2}(x_{n-1},x_n)$ for some $c>0$ up to a rotation of coordinates. Then the vanishing order of $u$ at $0$ is equal to $3/2$, thus less than $2$. We will thus focus on the reverse implication.

Assuming \eqref{eq:vanishing_u}, then by \cite[Proposition 7.1]{And13} (which holds for $F\in L^\infty$ as well) there is a sequence $r_j\rightarrow 0$ such that after a rotation of coordinates 
\begin{align}\label{eq:blowup_u}
\frac{u(r_j x)}{r_j^{-\frac{n}{2}}\|u\|_{L^2(B_{r_j}^+)}}\rightarrow c p_{\frac{3}{2}}(x_{n-1},x_n)
\end{align}
in $H^1_{loc}(\R^{n}_+)$ for some $c=c(\mu,\lambda,n)>0$. Let $\bar{u}:=u\eta$ and $\bar u= w+ v$ as in Proposition \ref{prop:Lame_to_obst}, where $w$ solves the auxiliary problem \eqref{eq:Neumann} and $\tilde{v}:=v^n\big|_{\R^{n-1}\times \{0\}}$ solves the obstacle problem for $(-\Delta)^{\frac{1}{2}}$ with the obstacle $\psi:=-\bar{w}$. Since the inhomogeneity satisfies $F\in L^\infty$, analogously as in the proof of Theorem \ref{thm:opt_reg} by the up to the boundary regularity for \eqref{eq:Neumann} and the definition of $\bar{w}$ it follows that $\psi\in C^{1,\gamma}_{loc}(\R^{n-1})$ for any $\gamma \in (0,1)$. Then  \eqref{eq:blowup_u} implies that
the vanishing order of $\tilde{v}-\psi$ at $0$ has to be $\frac{3}{2}$. Indeed, assume this were not true, then by the gap of the vanishing order for the obstacle problem for $(-\Delta)^{\frac{1}{2}}$ with a $C^{1,\gamma}$ obstacle with any $\gamma\in (0,1)$ \cite{RS17}, the vanishing order of $\tilde{v}-\psi$ at $0$ is arbitrarily close to $2$. This together with \eqref{eq:vanishing_u} yields that as $r_j \rightarrow 0$
\begin{align*}
0 \leftarrow \frac{(\tilde{v}-\psi)(r_j x')}{r_j^{-\frac{n}{2}}\|u\|_{L^2(B_{r_j}^+)}} = \frac{u^n(r_j x',0)}{r_j^{-\frac{n}{2}}\|u\|_{L^2(B_{r_j}^+)}}  \text{ in } L^2_{loc}(\R^{n-1}\times \{0\}),
\end{align*}
which is a contradiction to \eqref{eq:blowup_u}. Thus $x_0=0$ is a regular free boundary point for $\tilde{v}$ (by the definition of the regular free boundary point for $(-\Delta)^{\frac{1}{2}}$, see \cite[Definition 4.1 and Proposition 4.2]{KRS14}). This implies 
\begin{align}\label{eq:reg_v}
\lim_{r\rightarrow 0} \frac{u^{n}(rx',0)}{r^{\frac{3}{2}}}=\lim_{r\rightarrow 0} \frac{(\tilde{v}-\psi)(rx')}{r^{\frac{3}{2}}}= \tilde{c} p_{\frac{3}{2}}(x_{n-1},0)\cdot e_n
\end{align}
for some $\tilde{c}>0$ in $H^1_{loc}(\R^n_+)$. Moreover, the free boundary $\Gamma_{\tilde{v}}$ (and thus $\Gamma_u$) is a $C^{1,\beta}$-graph in a neighborhood of $0$. This together with the optimal $C^{1,\frac{1}{2}}_{loc}$ regularity of the solution implies that along subsequences the limits of $u(rx)/r^{3/2}$ as $r\rightarrow 0$ exist and solve the global Lam\'e system with the flat free boundary $\{x_{n-1}=x_n=0\}$. Since the restriction of the $n$th-component of the limits to $\R^{n-1}\times \{0\}$ is unique and equal to $\tilde{c}p_{3/2}(x_{n-1},0)\cdot e_n$, we can conclude that the limit $\lim_{r\rightarrow 0}u(rx)/r^{3/2}$ exists and is equal to $\tilde{c}p_{3/2}(x_{n-1},x_n)$.

\emph{Proof for (ii).} First by Proposition \ref{prop:Lame_to_obst} one has that $\Gamma_u\cap K =\Gamma_{\tilde v}\cap K$ for any $K\Subset B'_1$. On the hand, let $x_0\in \Gamma_{\tilde{v}}\cap B'_1$ be a regular free boundary point, then \eqref{eq:reg_v} holds true at $x_0$ up to a rotation of coordinates. A similar argument as in the proof for (i) yields that $\lim_{r\rightarrow 0} u(rx)/r^{3/2}$ exists and is equal to $\tilde{c}p_{3/2}$ for some $\tilde c>0$. This means that  $x_0$ is also a regular free boundary point for the Lam\'e system on $\Gamma_u$.   On the other hand, given $x_0$ a regular free boundary point on $\Gamma_u \cap B_1'$, by the definition of $\tilde{v}$ and Definition \ref{def:reg_fbp}  it holds that $x_0 \in \Gamma_{\tilde{v}}\cap B_1'$ is also a regular free boundary point for the obstacle problem for the half-Laplacian. 
\end{proof}

Using our above established relation between the problems for the Lam\'e free boundary problem and the half-Laplacian (see Proposition \ref{prop:Lame_to_obst}), the regularity and the higher regularity of the free boundary of the Lam\'e problem follows immediately from that for the fractional Laplacian (see \cite{AR20} and \cite{KRSIV}) by carrying out a truncation strategy as in \cite{KRSIV}.

\begin{proof}[Proof of Theorem \ref{thm:higher_reg}]
We first discuss (i): For simplicity of notation we assume that $x_0=0$. Seeking to obtain a local result, we consider $\bar{u}:= u \eta$ for $\eta$ as in Proposition \ref{prop:Lame_to_obst} and deduce estimates in any set $B_{\tilde{r}}^+ \subset B_r^+$ where $\eta = 1$. In the set $B_r^+$, the local regularity of the auxiliary function $w$ from Step (ii) in the proof of Proposition \ref{prop:Lame_to_obst} is determined by $F$ only. In particular, since the inhomogeneity satisfies $F\in C^{\theta-\frac{3}{2}}(B_r^+\cup B'_r)$, then by the up to the boundary Schauder estimate for Neumann problem for the Lam\'e system (cf. \cite{ADNII}), $w$ is $C^{\theta+\frac{1}{2}}(B_{\tilde{r}}^+\cup B_{\tilde{r}}')$. Additionally, Proposition \ref{prop:more_gen_prop_reg} yields integrability, i.e., $w\in C^{\theta+\frac{1}{2}}_{loc}(B_{\tilde{r}}^+\cup B_{\tilde{r}}')\cap \dot{H}^2(\R^n_+)$. Therefore, by Proposition \ref{prop:Lame_to_obst}, we infer $\tilde{h}=h+2\mu\p_n w^n+\lambda \di w\in C^{\theta-\frac{1}{2}}_{loc}( B_{\tilde{r}}')\cap L^2(\R^{n-1})$ and, as a consequence of Proposition \ref{prop:Lame_to_obst}, the interaction of pseudodifferential operators and Hölder-Zygmund spaces (see for instance \cite[Chapter VI, Section 5.3]{Stein}), and of the pseudolocality of the fractional Laplacian (as a pseudodifferential operator, see \cite[Chapter VI, Section 2]{Stein}), $\bar{w}=(-\Delta_{x'})^{-\frac{1}{2}}\tilde h\in C^{\theta + \frac{1}{2}}_{loc}(B_{\tilde{r}}')$. Hence $\tilde{v}(x'): =u^n(x',0)\eta(x',0)-\bar{w}(x')$ solves the obstacle problem for $(-\Delta_{x'})^{\frac{1}{2}}$ with the obstacle $\psi=\phi\eta-\bar{w} \in C^{\theta+\frac{1}{2}}_{loc}(B_{\tilde{r}}')$. 
By Lemma \ref{lem:reg_point} a regular point of the Lam\'e system is mapped to a regular point for the half-Laplacian and vice-versa.
By \cite[Theorem 1.2]{AR20} or \cite[Theorem 2]{KRSIV}, around the regular point $0\in \Gamma_{\tilde{v}}\cap B'_1$ the free boundary is $C^\theta$ regular. Since restricted to the support of $\eta$
the free boundary satisfies $\Gamma_{u}=\Gamma_{\tilde{v}}$, we have that $\Gamma_u$ is also $C^\theta$ regular in a neighborhood of $x_0=0$.

Finally, (ii) follows analogously by the results of \cite{KRSIV} for the analyticity of the regular free boundary for the obstacle problem for half-Laplacian with real analytic obstacle, once the (tangential) analyticity of $w|_{B_r'(x_0)}$, $\di(w)|_{B_r'(x_0)}$ and $\p_n w|_{B_r'(x_0)}$ (and thus of $\psi$) is established for some $r>0$. It hence remains to discuss the analyticity of these functions: This however is a consequence the fact that the data in the problem for $w$ (cf. \eqref{eq:Neumann}) are analytic in $B_r^+(x_0)\cup B'_r(x_0)$ (with $\eta$ chosen to be one in $B_r(x_0)$, $r>0$ small). By Proposition \ref{prop:more_gen_prop_reg} we obtain the analyticity of $w$ and thus of $\overline{w}$ and $\psi$  in a neighborhood of $x_0$.
\end{proof}

Last but not least, we turn to the proof of Theorem \ref{thm:Lame_sing_set}.

\begin{proof}[Proof of Theorem \ref{thm:Lame_sing_set}]
It suffices to establish that if $x_0 \in \Sigma_u \cap B_1'$, then $x_0 \in \Sigma_{\tilde{v}}\cap B_1'$. The remaining arguments then follow from the properties of the singular set of the Signorini problem for the Laplace operator (see (iii) at the beginning of this section) noting that the corresponding obstacle $\psi=-\bar w$ is smooth in a neighborhood of $x_0$. Since $\Lambda_{\tilde{v}}\cap K = \Lambda_{u}\cap K$ for any $K\Subset B_1'$ the implication that if $x_0 \in \Sigma_u \cap B_1'$, then $x_0 \in \Sigma_{\tilde{v}}\cap B_1'$ is immediate.
\end{proof}

\begin{rmk}
We remark that extensions of Theorem \ref{thm:Lame_sing_set} to settings with non-zero obstacle are possible but require a more careful investigation of the vanishing order of the blow-ups. 
\end{rmk}

\section*{Acknowledgements}
A.R. was supported by the Deutsche Forschungsgemeinschaft (DFG, German ResearchFoundation) under Germany’s Excellence Strategy EXC-2181/1 - 390900948 (the Heidelberg STRUCTURES Cluster of Excellence).

\bibliography{lame_neu}
\bibliographystyle{abbrv}

\end{document}